\newtheorem{theorem}{Theorem}
\newtheorem{lemma}{Lemma}
\newtheorem{corollary}{Corollary}
\newtheorem{proposition}{Proposition}
\theoremstyle{definition}
\newtheorem{example}{Example}
\newcommand{\diag}{\textnormal{diag}}
\newcommand{\B}{\textnormal{B}}
\newcommand{\RR}{\mathbb{R}}
\newcommand{\CC}{\mathbb{C}}
\newcommand{\rank}{\textnormal{rank}}
\newcommand{\MM}{\textnormal{\textbf{M}}}
\newcommand{\Gl}{\textnormal{\textbf{Gl}}}
\renewcommand{\maketitle}{
\pagestyle{plain}
\vspace*{\baselineskip}
\begin{center}
\MakeUppercase{\small Ralph John de la Cruz} \\ \emph{Institute of Mathematics, University of the Philippines} \\ \emph{Diliman, Quezon City 1105, Philippines} \\ (E-Mail: rjdelacruz@math.upd.edu.ph)\\[0.25cm]
\MakeUppercase{\small Philip Saltenberger} \\ \emph{Institute for Numerical Analysis, TU Braunschweig} \\ \emph{Universit\"atsplatz 2, 38106 Braunschweig, Germany} \\ (E-Mail: philip.saltenberger@tu-bs.de) \\ Corresponding author

\vspace*{3\baselineskip}

\MakeUppercase{\large \@title}
\end{center} \vskip-\baselineskip
}
\titleformat{\section}[hang]{\scshape}{\thesection. }{0pt}{\centering}[]
\title{Density of diagonalizable matrices in sets of structured matrices defined from indefinite scalar products}
\begin{document}
\maketitle
\begin{abstract}
For an (indefinite) scalar product $[x,y]_B = x^HBy$ for $B= \pm B^H \in \Gl_n(\CC)$ on $\CC^n \times \CC^n$ we show that the set of diagonalizable matrices is dense in the set of all $B$-selfadjoint, $B$-skewadjoint, $B$-unitary and $B$-normal matrices.
\end{abstract}

\section{Introduction}
\label{s:intro}

Whenever $B \in \Gl_n(\CC)$ is some arbitrary (nonsingular) matrix, the function
$$[  \cdot , \cdot]_B: \CC^n \times \CC^n \rightarrow \CC, \; (x,y) \mapsto x^HBy \quad (x^H := \overline{x}^T) $$
defines a nondegenerate sesquilinear form on $\CC^n \times \CC^n$. In case $B$ is not necessarily Hermitian positive definite, such forms are also referred to as indefinite scalar products \cite[Sec.\,2]{Mack07}. In this work, we particularly consider such products for the cases $B=B^H$, $B=-B^H$ (with $B^H$ denoting the conjugate transpose of $B$) and $B^HB = I_n$ (i.e. $B$ is unitary). Indefinite scalar products with $B= \pm B^H$ are called orthosymmetric while those with $B^H = B^{-1}$ are called unitary \cite[Def.\,3.1,\,4.1]{Mack07}.

Several classes of matrices $A \in \MM_n(\CC)$ are naturally related to $[  \cdot , \cdot]_B$:
\begin{enumerate}[(a)]
\item A matrix $J \in \MM_n(\CC)$ is called $B$-selfadjoint if $[Jx,y]_B=[x,Jy]_B$ holds for all $x,y \in \CC^n$. It follows that $x^HJ^HBy = x^HBJy$ holds for all $x,y \in \CC^n$ if $J$ is $B$-selfadjoint. This means we have $J^HB = BJ$,  that is, $J = B^{-1}J^HB$. The set of $B$-selfadjoint matrices is denoted by $\mathbb{J}(B)$.
\item A matrix $L \in \MM_n(\CC)$ is called $B$-skewadjoint if $[Lx,y]_B=[x,-Ly]_B$ holds for all $x,y \in \CC^n$. It follows from this equation as in (a) that $L$ is $B$-skewadjoint if and only if $-L = B^{-1}L^HB$. The set of all $B$-skewadjoint matrices is denoted by $\mathbb{L}(B)$.
\item A matrix $G \in \MM_n(\CC)$ is called $B$-unitary if $[Gx,Gy]_B=[x,y]_B$ holds for all $x,y \in \CC^n$. This means that $x^HG^HBGy = x^HBy$ has to hold for all $x,y \in \CC^n$ and implies that $G^HBG = B$. The set of all $B$-unitary matrices is denoted by $\mathbb{G}(B)$.
\item A matrix $N \in \MM_n(\CC)$ is called $B$-normal if $NB^{-1}N^HB = B^{-1}N^HBN$ holds. The set of all $B$-normal matrices is denoted by $\mathcal{N}(B)$\footnote{Notice that the set $\mathcal{N}(B)$ includes the sets $\mathbb{J}(B), \mathbb{L}(B)$ and $\mathbb{G}(B)$.}.
\end{enumerate}

A great many of problems in control systems theory, matrix equations or differential equations involve matrices from the classes of matrices defined above (see, e.g., \cite{Riccati, Mack07} and the references therein).  The monograph \cite{GoLaRod05} contains a comprehensive treatment and applications for the Hermitian case $B=B^H$.

Assume $B = I_n$ is the $n \times n$ identity matrix. Then the sets of $B$-selfadjoint, $B$-skewadjoint, $B$-unitary and $B$-normal matrices $A \in \MM_n(\CC)$ coincide with the sets of Hermitian ($A=A^H$), skew-Hermitian ($A=-A^H$), unitary ($A^HA=I_n$) and normal ($AA^H=A^HA$) matrices. It is a well-known fact that every matrix belonging to any of these four sets of matrices is semisimple, i.e., diagonalizable \cite{Grone}. 
In case $B \neq I_n$ the situation is different:
for instance, consider the $n \times n$ reverse identity matrix $R_n$ (which is Hermitian but not positive definite) and some basic $n \times n$ Jordan block $J(\lambda)$ for some $\lambda$, i.e.
\begin{equation} J(\lambda) = \begin{bmatrix} \lambda & 1 & & & \\ & \lambda & 1 & & \\ & & \ddots & \ddots & \\ & & & \lambda & 1 \\ & & & & \lambda \end{bmatrix}, \quad R_n = \begin{bmatrix} & &  & & 1 \\ & & & 1 & \\ & & \iddots & & \\ & 1 &  & & \\ 1 & &  & & \end{bmatrix}. \label{equ:rev_identity} \end{equation}
If $\lambda \in \mathbb{R}$, then $J(\lambda) \in \mathbb{J}(R_n)$. However, $J(\lambda)$ is the prototype of a matrix which is not diagonalizable (see also \cite[Ex.\,4.2.1]{GoLaRod05}). Similar examples can be found for the other types of matrices and other $B$. In general, for a (skew)-Hermitian  matrix $B$, a matrix $A \in \MM_n(\CC)$ belonging to $\mathbb{J}(B), \mathbb{L}(B), \mathbb{G}(B)$ or $\mathcal{N}(B)$ need not be semisimple. Thus, given some (skew)-Hermitian $B \in \Gl_n(\CC)$, the following question arises: ``How large is the subset of semisimple matrices in the sets $\mathbb{J}(B), \mathbb{L}(B), \mathbb{G}(B)$ and $\mathcal{N}(B)$?'' The answer to this question is simply ``every matrix is semisimple'' in case $B=I_n$ (and whenever $B$ is Hermitian positive definite), but seems to be unknown for general indefinite or skew-Hermitian $B$.


In this work we consider the question raised above from a topological point of view. Recall that $\MM_{n}(\CC)$ can be considered as a topological space with basis $B_R(A) = \lbrace A' \in \MM_{n}(\CC) \, : \, \Vert A-A' \Vert < R \rbrace$ for $A \in \MM_{n}(\CC)$ and $R \in \RR$, $R > 0$ (see, e.g. \cite[Sec.\,11.2]{Hartfiel}). The sets $\mathbb{J}(B), \mathbb{L}(B), \mathbb{G}(B)$ and $\mathcal{N}(B)$ can thus be interpreted as topological spaces on their own equipped with the subspace topology \cite[Sec.\,1.5]{top}. For instance, a subset $\mathcal{S} \subset \mathcal{N}(B)$ is open (in this subspace topology) if $\mathcal{S}$ is the intersection of $\mathcal{N}(B)$ with some open subset of $\MM_{n}(\CC)$. If a property does not hold for all elements in a topological space, it is usually reasonable to ask whether it holds for a dense subset. Thus, a second question, in this context naturally related to the first one, is ``As not all matrices in $\mathbb{J}(B), \mathbb{L}(B), \mathbb{G}(B)$ or $\mathcal{N}(B)$ are semisimple, is the subset of semisimple matrices in these sets at least dense?''


Since the sets $\mathbb{J}(B), \mathbb{L}(B), \mathbb{G}(B)$ and $\mathcal{N}(B)$ are defined by matrix equations, they are (topologically) closed subsets of $\MM_n(\CC)$ (for instance, $N \in \MM_n(\CC)$ is $B$-normal if and only if it satisfies the equation $NB^{-1}N^HB = B^{-1}N^HBN$). Consequently, although the set of semisimple matrices is dense in $\MM_n(\CC)$ \cite[Cor.\,7.3.3]{Lewis}, there is no direct reasoning why this fact should carry over to $\mathbb{J}(B), \mathbb{L}(B), \mathbb{G}(B)$ or $\mathcal{N}(B)$. In fact, a closed subset of $\MM_n(\CC)$ need not contain any diagonalizable matrix at all (see \cite{Hartfiel} for an example). This work is devoted to the second question raised above and gives a positive answer. Our main result can be stated as follows:
\begin{quote}
\textbf{Main result.}  Let $B = \pm B^H \in \Gl_n(\CC)$. Then the set of diagonalizable matrices is dense in $\mathbb{J}(B)$, $\mathbb{L}(B)$, $\mathbb{G}(B)$ and $\mathcal{N}(B)$.
\end{quote}

Here we will work with the 2-norm of matrices\footnote{Our results hold in the same way for any submultiplicative and unitarily invariant matrix norm.}, i.e. $$\Vert A \Vert_2 := \max_{\Vert x \Vert_2 = 1} \Vert Ax \Vert_2.$$ The density result stated above is thus equivalent to the fact that, for any $A \in \mathbb{J}(B)$, $\mathbb{L}(B), \mathbb{G}(B)$ or $\mathcal{N}(B)$ and any $\varepsilon > 0$ there exists some diagonalizable $A'$ from the same class of matrices such that $\Vert A - A' \Vert_2 < \varepsilon$.  Our motivation for the analysis of density stems from the fact that, for certain $B$ such as $B=R_n$ (with $R_n$ as in \eqref{equ:rev_identity}) or
$$ B = J_{2n} = \begin{bmatrix} 0 & I_n \\ -I_n & 0 \end{bmatrix} \in \MM_{2n}(\RR),$$
 semisimple matrices in $\mathbb{J}(B), \mathbb{L}(B), \mathbb{G}(B)$ or $\mathcal{N}(B)$ can all be transformed to a sparse and nicely structured (canonical) form by a $B$-unitary similarity transformation as shown in \cite{CruzSaltenCanForm}. From this point of view our results imply that, even if the transformation established in \cite{CruzSaltenCanForm} does not exists for some specific matrix $A \in \mathbb{J}(B), \mathbb{L}(B)$, $\mathbb{G}(B)$ or $\mathcal{N}(B)$, it will exist for matrices arbitrarily close to $A$ from the same structure class.

In Section \ref{s:basics} some preliminaries and basic results are given. Section \ref{s:density} gives the proofs for the main results on the density of semisimple matrices in $\mathbb{J}(B), \mathbb{L}(B), \mathbb{G}(B)$ and $\mathcal{N}(B)$. We focus on the case $B=B^H$ in Sections \ref{ss:selfskew} to \ref{ss:normal} and discuss the case $B=-B^H$ in Section \ref{ss:skewHermitian}. Some conclusions are given in Section \ref{s:conclusions}.

\section{Preliminaries and basic Results}
\label{s:basics}

The set of all $j \times k$ matrices over $\mathbb{F} = \RR, \CC$ is denoted by $\MM_{j \times k}(\mathbb{F})$. Whenever $j=k$, we use the short-hand-notation $\MM_k(\mathbb{F}) = \MM_{k \times k}(\mathbb{F})$. The notation $\Gl_k(\mathbb{F})$ refers to the general linear group  over $\mathbb{F}^k$ (i.e. the set of $k \times k$ nonsingular matrices over $\mathbb{F}$). The set of all eigenvalues of a matrix $A \in \MM_k(\CC)$ is called the spectrum of $A$ and is denoted $\sigma(A)$. The multiplicity of an eigenvalue $\lambda \in \CC$ as a root of $\det(A - x I_k)$ is called its algebraic multiplicity and is denoted by $\mathfrak{m}(A, \lambda)$. For each $\lambda \in \sigma(A)$ any vector $v \in \CC^k$ that satisfies $Av = \lambda v$ is called an eigenvector for $\lambda$. The set of all those vectors corresponding to $\lambda \in \sigma(A)$ is called the eigenspace for $\lambda$. It is a subspace of $\CC^k$ and its dimension is called the geometric multiplicity of $\lambda$. The conjugate transpose of a vector/matrix is denoted with the superscript $^H$ while $^T$ is used for the transposition without complex conjugation. A matrix $A \in \MM_n(\CC)$ is called semisimple (or diagonalizable), if there exists some $S \in \Gl_n(\CC)$ such that $S^{-1}AS$ is a diagonal matrix.

Let $B \in \Gl_n(\CC)$. For any $A \in \MM_n(\CC)$ let $A^\star := B^{-1}A^HB$. The matrix $A^\star$ is usually referred to as the adjoint for $A$, cf. \cite[Sec.\,2]{Mack07}. The sets of all $B$-selfadjoint, $B$-skewadjoint, $B$-unitary and $B$-normal matrices as introduced in Section \ref{s:intro} can now be characterized by the equations $A=A^\star$, $A=-A^\star$, $A^\star = A^{-1}$ and $AA^\star = A^\star A$, respectively.
Notice that the mapping $^\star : A \mapsto A^\star$ is $\RR$-linear, that is, for any $A,C \in \MM_n(\CC)$ and any $\alpha, \gamma  \in \RR$ it holds that
$$ \big( \alpha A + \gamma C \big)^\star = \alpha A^\star + \gamma C^\star. $$
Moreover, $(AC)^\star = C^\star A^\star$ holds, so $^\star$ is antihomomorphic.

\begin{lemma} \label{lem:R-subspaces}
Let $B \in \Gl_n(\CC)$. Then the sets $\mathbb{J}(B)$ and $\mathbb{L}(B)$ are $\RR$-subspaces of $\MM_n(\CC)$. Moreover, $\mathbb{G}(B)$ is a subgroup of $\Gl_n(\CC)$.
\end{lemma}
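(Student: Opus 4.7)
The plan is to exploit the two algebraic identities highlighted just before the lemma: the adjoint map $A \mapsto A^\star = B^{-1}A^H B$ is $\mathbb{R}$-linear, and it is antihomomorphic with respect to matrix multiplication, i.e.\ $(AC)^\star = C^\star A^\star$. Together with the characterizations $A = A^\star$, $A = -A^\star$, and $A^\star = A^{-1}$ for membership in $\mathbb{J}(B)$, $\mathbb{L}(B)$, and $\mathbb{G}(B)$, respectively, everything will follow from short direct verifications.

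For $\mathbb{J}(B)$, I would take $J_1, J_2 \in \mathbb{J}(B)$ and $\alpha, \gamma \in \mathbb{R}$, and compute
$$(\alpha J_1 + \gamma J_2)^\star = \alpha J_1^\star + \gamma J_2^\star = \alpha J_1 + \gamma J_2,$$
using $\mathbb{R}$-linearity of $^\star$ in the first step and the defining equations $J_i = J_i^\star$ in the second. Noting that the zero matrix trivially satisfies $0 = 0^\star$ shows $\mathbb{J}(B)$ is nonempty and closed under $\mathbb{R}$-linear combinations, hence an $\mathbb{R}$-subspace. The argument for $\mathbb{L}(B)$ is identical up to sign: $(\alpha L_1 + \gamma L_2)^\star = \alpha L_1^\star + \gamma L_2^\star = -(\alpha L_1 + \gamma L_2)$.

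For $\mathbb{G}(B)$ I would check the three subgroup axioms. First, $I_n \in \mathbb{G}(B)$ because $I_n^\star = B^{-1}I_nB = I_n = I_n^{-1}$. Second, if $G_1, G_2 \in \mathbb{G}(B)$, antihomomorphicity gives
$$(G_1 G_2)^\star = G_2^\star G_1^\star = G_2^{-1}G_1^{-1} = (G_1 G_2)^{-1},$$
so $G_1 G_2 \in \mathbb{G}(B)$. Third, for $G \in \mathbb{G}(B)$, I would apply $^\star$ to the identity $GG^{-1} = I_n$ to get $(G^{-1})^\star G^\star = I_n$, and use $G^\star = G^{-1}$ to conclude $(G^{-1})^\star = G = (G^{-1})^{-1}$, so $G^{-1} \in \mathbb{G}(B)$. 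Finally $\mathbb{G}(B) \subseteq \mathbf{Gl}_n(\mathbb{C})$ is immediate since $G^\star G = I_n$ forces $G$ to be nonsingular.

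There is no real obstacle here: every step is a one-line manipulation once the $\mathbb{R}$-linearity and antihomomorphism of $^\star$ are in hand. The only point where one must pause is the closure of $\mathbb{G}(B)$ under inversion, where one has to remember that $(G^{-1})^\star$ is not an axiom but must be extracted from applying $^\star$ to $GG^{-1} = I_n$.
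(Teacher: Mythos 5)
Your proof is correct and follows essentially the same approach as the paper: the first claim is immediate from the $\mathbb{R}$-linearity of $^\star$, and the subgroup axioms for $\mathbb{G}(B)$ are checked directly. The only cosmetic difference is that you phrase the $\mathbb{G}(B)$ verifications uniformly through the antihomomorphism of $^\star$ and the characterization $G^\star = G^{-1}$, whereas the paper works directly from the defining equation $G^HBG = B$; the two are trivially equivalent and neither buys anything over the other.
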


\begin{proof}
The first statement follows immediately since $^\star: A \mapsto A^\star$ is $\RR$-linear.

Now let $G \in \mathbb{G}(B)$. Then, as $G^HBG = B$ holds, $\det(G^H) \det(G) = 1$ follows, so $G$ is nonsingular. Moreover, $G^{-H}BG^{-1} = B$ follows, so $G^{-1} \in \mathbb{G}(B)$. Finally, for $G,F \in \mathbb{G}(B)$, we obtain
$$ \big( FG \big)^HB \big( FG \big) = G^H \big( F^HBF \big) G = G^HBG = B.$$
Therefore, $FG \in \mathbb{G}(B)$. Since $I_n \in \mathbb{G}(B)$, the proof is complete.
\end{proof}

The sets $\mathbb{J}(B)$ and $\mathbb{L}(B)$ are often referred to as the Jordan and Lie algebras (cf. \cite[Sec.\,2]{Mack07}) for $[ \cdot, \cdot]_B$ since $\mathbb{J}(B)$ is closed under the operation $A \odot C := \frac{1}{2}(AC+CA)$ whereas $\mathbb{L}(B)$ is closed under the Lie bracket $[A,C] := AC - CA$. The following Corollary \ref{cor:switch1} shows that $\mathbb{J}(B)$ and $\mathbb{L}(B)$ can never be $\CC$-subspaces of $\MM_n(\CC)$. In fact, multiplication by $i$ gives a possibility to easily switch between the sets $\mathbb{J}(B)$ and $\mathbb{L}(B)$.

\begin{corollary} \label{cor:switch1}
Let $B \in \Gl_n(\CC)$. If $J \in \mathbb{J}(B)$, then $iJ \in \mathbb{L}(B)$. On the other hand, if $L \in \mathbb{L}(B)$, then $iL \in \mathbb{J}(B)$.
\end{corollary}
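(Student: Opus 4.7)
The plan is to exploit the conjugate-linear behavior of the adjoint operation $^\star$ with respect to complex scalars. Although the excerpt points out that $^\star$ is $\RR$-linear, when multiplying by a purely imaginary scalar like $i$ the conjugate transpose inside the definition $A^\star = B^{-1}A^HB$ produces a sign flip. Concretely, for any $\alpha \in \CC$ and $A \in \MM_n(\CC)$,
\[
(\alpha A)^\star = B^{-1}(\alpha A)^H B = \overline{\alpha}\, B^{-1} A^H B = \overline{\alpha}\, A^\star.
\]
This single identity drives both implications.

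First, I would suppose $J \in \mathbb{J}(B)$, i.e.\ $J^\star = J$, and compute $(iJ)^\star = \overline{i}\, J^\star = -i J = -(iJ)$, which is precisely the condition $iJ = -(iJ)^\star$ characterizing membership in $\mathbb{L}(B)$. Conversely, assuming $L \in \mathbb{L}(B)$, i.e.\ $L^\star = -L$, the same identity yields $(iL)^\star = -i L^\star = -i(-L) = iL$, so $iL \in \mathbb{J}(B)$.

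There is no real obstacle here; the whole content of the corollary is the observation that $(iA)^\star = -i A^\star$. The only thing to be careful about is not to invoke $\RR$-linearity of $^\star$ as stated in the text, since $i \notin \RR$; instead one has to unfold the definition of $^\star$ and use $(iA)^H = -i A^H$ directly. Once this is pointed out, the two implications are one-line verifications.
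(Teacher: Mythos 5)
Your proof is correct and follows essentially the same route as the paper: both unfold the definition $A^\star = B^{-1}A^HB$ and use $(iA)^H = -iA^H$ to obtain $(iA)^\star = -iA^\star$, then apply the membership conditions $J^\star = J$ and $L^\star = -L$. Your observation that one must bypass the $\RR$-linearity of $^\star$ and appeal to its conjugate-linear behavior is exactly the computation the paper carries out.
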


\begin{proof}
Let $J \in \mathbb{J}(B)$. Notice that $$B^{-1}(iJ)^HB = B^{-1}(-iJ^H)B = -i(B^{-1}J^HB) = -iJ$$ for any $J \in \mathbb{J}(B)$. Thus $iJ \in \mathbb{L}(B)$. On the other hand, if $L \in \mathbb{L}(B)$, then $B^{-1}(iL)^HB = -iB^{-1}L^HB = iL$, so $iL \in \mathbb{J}(B)$.
\end{proof}

The following Lemma \ref{lem:switch2} can easily be verified by a straight forward calculation (see also \cite[Sec.\,1]{Mehl}). Therefore, the proof is omitted.

\begin{lemma} \label{lem:switch2}
Let $B \in \Gl_n(\CC)$ and $A \in \MM_n(\CC)$. Furthermore, let $T \in \Gl_n(\CC)$ and consider
$$ A' := T^{-1}AT \quad \textnormal{and} \quad B' := T^HBT.$$
Then $A$ is $B$-selfadjoint/$B$-skewadjoint/$B$-unitary/$B$-normal if and only if $A'$ is $B'$-selfadjoint/$B'$-skew-adjoint/$B'$-unitary/$B'$-normal.
\end{lemma}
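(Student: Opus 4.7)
The plan is to reduce all four equivalences to a single algebraic identity relating the adjoint operations with respect to $B$ and $B'$. Writing $\star$ for the $B$-adjoint and $\star'$ for the $B'$-adjoint, I expect the key observation to be
$$ (A')^{\star'} = (B')^{-1}(A')^H B' = (T^H B T)^{-1}(T^{-1}AT)^H(T^H B T) = T^{-1}\bigl(B^{-1}A^H B\bigr)T = T^{-1}A^\star T,$$
where the middle equality uses $(T^HBT)^{-1} = T^{-1}B^{-1}T^{-H}$ and the factors $T^{-H}T^H = I_n$ cancel in the middle. So conjugation by $T$ intertwines the two adjoint maps.

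From this identity each of the four cases follows immediately. For $B$-selfadjointness: $A = A^\star$ iff $T^{-1}AT = T^{-1}A^\star T$, i.e.\ $A' = (A')^{\star'}$. The $B$-skewadjoint case is identical with a sign change. For $B$-unitarity: $A^\star = A^{-1}$ iff $T^{-1}A^\star T = T^{-1}A^{-1}T = (T^{-1}AT)^{-1}$, i.e.\ $(A')^{\star'} = (A')^{-1}$. For $B$-normality: since $A'(A')^{\star'} = (T^{-1}AT)(T^{-1}A^\star T) = T^{-1}AA^\star T$ and similarly $(A')^{\star'}A' = T^{-1}A^\star A\,T$, the commutation $AA^\star = A^\star A$ is equivalent to $A'(A')^{\star'} = (A')^{\star'}A'$.

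There is no real obstacle here; the only thing to be mindful of is the slightly unusual action on $B$ (congruence $B \mapsto T^H B T$ rather than similarity), which is exactly what makes the $T^{-H}$ and $T^H$ factors cancel so that the $B^{-1}A^HB$ block emerges cleanly. I would present the computation of $(A')^{\star'} = T^{-1}A^\star T$ once and then list the four equivalences as one-line consequences, which is presumably why the authors deemed the proof omittable.
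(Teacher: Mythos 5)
Your proof is correct, and the computation of the intertwining identity $(A')^{\star'} = T^{-1}A^\star T$ is exactly the ``straightforward calculation'' the paper alludes to when it omits the proof. Presenting the identity once and deriving the four cases as one-line corollaries is the natural way to organize it, and nothing is missing.
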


Let $B \in \Gl_n(\CC)$. Then any $A \in \MM_n(\CC)$ can always be expressed as $A = S+K$ with
\begin{equation} S = \frac{1}{2} \left( A + A^\star \right) \in \MM_n(\CC), \qquad K := \frac{1}{2} \left( A - A^\star \right) \in \MM_n(\CC). \label{equ:toeplitz} \end{equation}
Assuming that $B= \pm B^H$, it is easy to see that $S \in \mathbb{J}(B)$ and $K \in \mathbb{L}(B)$. In this case, $A=S+K$ can be interpreted as a $B$-analogue of the Toeplitz decomposition stated in \cite[Thm.~4.1.2]{HoJo}. The next Lemma \ref{lem:projection} shows that, in case $B^HB=I_n$ additionally holds, $S$ and $K$ in \eqref{equ:toeplitz} are the best approximations to $A$ from $\mathbb{J}(B)$ and $\mathbb{L}(B)$ with respect to any unitarily invariant matrix norm. The proof of Lemma \ref{lem:projection} is similar to \cite[Thm.\,2]{FanHoffmann55} (see also \cite[Lem.\,8.3]{PS}).

\begin{lemma} \label{lem:projection}
Let $B = \pm B^H \in \Gl_n(\CC)$ with $B^HB = I_n$ and let $\Vert \cdot \Vert$ be any unitarily invariant matrix norm.
\begin{enumerate}
\item[(a)] Let $A \in \MM_n(\CC)$ and $S := \frac{1}{2}(A+A^\star).$ Then $ S \in \mathbb{J}(B)$ and it holds that $\Vert A - S \Vert \leq \Vert A - C \Vert$ for any other matrix $C \in \mathbb{J}(B)$.
\item[(b)] Let $A \in \MM_n(\CC)$ and $K := \frac{1}{2}(A-A^\star).$ Then $ K \in \mathbb{L}(B)$ and it holds that $\Vert A - K \Vert \leq \Vert A - C \Vert$ for any other matrix $C \in \mathbb{L}(B)$.
\end{enumerate}
\end{lemma}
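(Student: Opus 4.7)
The central observation is that the extra hypothesis $B^HB = I_n$ says $B$ is unitary, so left-multiplication by $B$ is an isometry for every unitarily invariant norm: $\|BX\| = \|X\|$ for all $X \in \MM_n(\CC)$. The plan is to use this isometry to transport the problem into the standard Hermitian/skew-Hermitian setting, where the classical Fan--Hoffmann inequality \cite{FanHoffmann55} applies, namely: if $P$ is Hermitian and $Q$ is skew-Hermitian, then $\|P + Q\| \geq \|P\|$ and $\|P + Q\| \geq \|Q\|$ in any unitarily invariant norm.

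The algebraic bridge between the two worlds is the following direct computation. For $H \in \mathbb{J}(B)$ the condition $H = B^{-1}H^HB$ rewrites as $H^HB = BH$, so $(BH)^H = B^HH = \pm BH$ according to whether $B = B^H$ or $B = -B^H$; hence $BH$ is Hermitian in the former case and skew-Hermitian in the latter. Similarly, for $K \in \mathbb{L}(B)$ we have $K^HB = -BK$, and $(BK)^H = -B^HK = \mp BK$, so $BK$ carries the opposite Hermitian type from $BH$. Thus, in both the Hermitian and skew-Hermitian case for $B$, the matrices $BH$ and $BK$ form a Hermitian/skew-Hermitian pair.

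With this in hand, part (a) is immediate. Fix any $C \in \mathbb{J}(B)$ and decompose
$$ A - C = (S - C) + K, $$
where $S - C \in \mathbb{J}(B)$ by Lemma \ref{lem:R-subspaces} and $K = A - S \in \mathbb{L}(B)$ by construction. Multiplying by $B$,
$$ B(A - C) = B(S - C) + BK $$
is by the previous paragraph a sum of a Hermitian and a skew-Hermitian matrix (in some order). The Fan--Hoffmann inequality together with the isometry property of $B$ gives
$$ \|A - C\| = \|B(A - C)\| \geq \|BK\| = \|K\| = \|A - S\|, $$
which is the claim. Part (b) follows by the symmetric argument, writing $A - C = S + (K - C)$ for $C \in \mathbb{L}(B)$ with $S \in \mathbb{J}(B)$ and $K - C \in \mathbb{L}(B)$; alternatively, one can invoke Corollary \ref{cor:switch1} to reduce (b) to (a) by multiplication by $i$.

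The only nontrivial ingredient is the Fan--Hoffmann inequality itself, but it is classical and already signalled by the authors as the model for the proof. All other steps are routine: the unitarity of $B$ does the heavy lifting by turning the $B$-adjoint into the ordinary conjugate transpose, after which the best-approximation statements become the standard ones for Hermitian and skew-Hermitian matrices.
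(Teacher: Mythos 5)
Your proof is correct, but it follows a genuinely different route from the paper's. The paper does not invoke the Fan--Hoffmann inequality as a black box; it \emph{reproves} the bound from scratch by the short triangle-inequality calculation
\[
\Vert A - S \Vert = \tfrac12 \Vert (A-C) - (A-C)^\star \Vert \leq \tfrac12\bigl(\Vert A-C \Vert + \Vert (A-C)^\star \Vert\bigr) = \Vert A-C\Vert,
\]
using $C = C^\star$ and the observation that $B^HB = I_n$ plus unitary invariance gives $\Vert X^\star \Vert = \Vert B^H X^H B \Vert = \Vert X^H\Vert = \Vert X\Vert$. (The citation of \cite{FanHoffmann55} in the paper signals only that the argument is \emph{modelled} on that proof, not that the theorem is applied.) Your version instead transports the whole problem into the classical setting: since $B$ is unitary, left-multiplication by $B$ is an isometry for every unitarily invariant norm, and the identities $(BH)^H = \pm BH$ for $H \in \mathbb{J}(B)$, $(BK)^H = \mp BK$ for $K \in \mathbb{L}(B)$ show that $B$ carries the $B$-Jordan/$B$-Lie pair onto a Hermitian/skew-Hermitian pair, after which the classical best-approximation theorem of Fan and Hoffmann gives the conclusion in one line. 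Your approach is conceptually cleaner and makes the role of the unitarity hypothesis completely transparent, at the cost of quoting an external theorem; the paper's approach is slightly longer to read but entirely self-contained, needing nothing beyond the triangle inequality and the defining properties of unitarily invariant norms. Both are valid, and each illuminates the result in its own way.
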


\begin{proof}
(a) Let $\Vert \cdot \Vert$ be some unitarily invariant matrix norm and let $A \in \MM_n(\CC)$ be given. It follows from $B=\pm B^H$ that $(A^\star)^\star = A$. Therefore, we have $S^\star = \frac{1}{2} (A + A^\star)^\star = \frac{1}{2}(A^\star + A) = S$, so $S \in \mathbb{J}(B)$.

Now let $C \in \mathbb{J}(B)$ be arbitrary. Then
\begin{align*}
\Vert A - S \Vert &= (1/2) \Vert A - A^\star \Vert = \Vert (A-C) - (A^\star - C) \Vert \\
&= (1/2) \Vert (A-C) - (A-C)^\star \Vert \leq (1/2) \big( \Vert A-C \Vert + \Vert (A-C)^\star \Vert \big) \\
&= (1/2) \big( \Vert A-C \Vert + \Vert (A-C)^H \Vert \big) = \Vert A-C \Vert.
\end{align*}
In conclusion we have $\Vert A - S \Vert \leq \Vert A - C \Vert$. The proof for (b) proceeds along the same lines.
\end{proof}

Let
$$J_k(\lambda) := \begin{bmatrix} \lambda & 1 & & & \\ & \lambda & 1 & & \\ & & \ddots & \ddots & \\ & & & \lambda & 1 \\ & & & & \lambda \end{bmatrix} \in \MM_k(\CC)$$
denote a basic Jordan block for $\lambda$ if $\lambda \in \RR$ and define $J_k(\lambda) = J_{\tilde{k}}(\lambda) \oplus J_{\tilde{k}}(\overline{\lambda})$ with $2\tilde{k} = k$ in case $\lambda \in \CC \setminus \RR$. Here and in the following, the notation $\oplus $ is used to denote the direct sum of two matrices, i.e. $X \oplus Y = \textnormal{diag}(X,Y)$. The next two theorems for the case when $B=B^H$ are taken from \cite{GoLaRod05} and will be useful for some proofs in Section \ref{s:density}. For convenience, we omit the subscript $k$ for the Jordan blocks in Theorem \ref{thm:pair-form}.

\begin{theorem}[{\cite[Thm.\,5.1.1]{GoLaRod05}}] \label{thm:pair-form}
Let $B = B^H \in \Gl_n(\CC)$ and $A \in \mathbb{J}(B)$. Then there is some $T \in \Gl_n(\CC)$ such that $T^{-1}AT = J$ and $T^HBT = \widetilde{B}$ where
$$ J = J(\lambda_1) \oplus \cdots \oplus J(\lambda_{\alpha}) \oplus J(\lambda_{\alpha +1}) \oplus \cdots \oplus J(\lambda_{\beta})$$
is a Jordan normal form for $A$ where $\lambda_1, \ldots , \lambda_{\alpha}$ are the real eigenvalues of $A$ and $\lambda_{\alpha +1}, \ldots , \lambda_\beta$ are the nonreal eigenvalues of $A$ from the upper half-plane. Moreover,
$$\widetilde{B} :=  T^HBT = \eta_1 P_1 \oplus \cdots \eta_{\alpha} P_{\alpha} \oplus P_{\alpha+1} \oplus \cdots \oplus P_\beta$$
where, for $k=1, \ldots , \beta$, $P_k = R_p$ is the $p \times p$ reverse identity matrix (see also \eqref{equ:rev_identity}) with $p \times p$ being the size of $J(\lambda_k)$ and $\eta = \lbrace \eta_1, \ldots , \eta_\alpha \rbrace$ is an ordered set of signs $\pm1$.
\end{theorem}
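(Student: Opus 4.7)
The plan is to reduce the joint canonical form for the pair $(A,B)$ to a purely algebraic classification problem for Hermitian invertible intertwiners of a Jordan matrix. First I would apply the complex Jordan canonical form to $A$: choose $T_1 \in \Gl_n(\CC)$ such that $T_1^{-1} A T_1 = J$, where $J$ lists the Jordan blocks for the real eigenvalues first and then one block $J_{\tilde k}(\lambda) \oplus J_{\tilde k}(\overline{\lambda})$ for each nonreal $\lambda$ from the upper half-plane, matching the ordering in the statement. Setting $\widetilde{B}_0 := T_1^H B T_1$, the $B$-selfadjointness $A^H B = BA$ becomes the intertwining relation $J^H \widetilde{B}_0 = \widetilde{B}_0 J$, with $\widetilde{B}_0$ still Hermitian and nonsingular. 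The task then reduces to finding a further $T_2$ commuting with $J$ such that $T_2^H \widetilde{B}_0 T_2 = \widetilde{B}$ has the stated sparsity; then $T := T_1 T_2$ proves the theorem.

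Next I would decouple $\widetilde{B}_0$ into blocks. Writing $\widetilde{B}_0 = (B_{ij})$ with respect to the Jordan block decomposition of $J$, the Sylvester operator $X \mapsto J(\lambda_i)^H X - X J(\lambda_j)$ is invertible whenever $\overline{\lambda_i} \neq \lambda_j$, so the intertwining relation forces $B_{ij} = 0$ unless the corresponding blocks have eigenvalues that are complex conjugates of each other. Consequently $\widetilde{B}_0$ splits into a direct sum of smaller pieces: one piece for each real eigenvalue $\lambda$ (pairing the cluster of Jordan blocks at $\lambda$ with itself) and one for each nonreal $\lambda$ from the upper half-plane (pairing the cluster at $\lambda$ off-diagonally with the cluster at $\overline{\lambda}$).

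Finally, within each cluster I would normalize $\widetilde{B}_0$ using the action of the Jordan commutant. For a single block $J_k(\lambda)$ with $\lambda \in \RR$, the Hermitian nonsingular matrices $X$ satisfying $J_k(\lambda)^H X = X J_k(\lambda)$ are precisely the nonsingular upper anti-triangular Toeplitz matrices with real leading anti-diagonal entry, and the upper-triangular Toeplitz commutant of $J_k(\lambda)$ acts on them by $X \mapsto S^H X S$ in a way that scales the leading entry to $\pm 1$, producing the form $\eta_k R_k$ with an unremovable sign $\eta_k \in \{-1,+1\}$. For a conjugate pair $J_{\tilde k}(\lambda) \oplus J_{\tilde k}(\overline{\lambda})$ with $\lambda$ in the upper half-plane, the coupling is purely off-diagonal and can be normalized to $R_{2\tilde k}$ without a surviving sign, since conjugation by $\diag(\alpha I_{\tilde k}, \overline{\alpha}^{-1} I_{\tilde k})$ with $\alpha \in \CC^\times$ absorbs any complex scalar factor. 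The main obstacle is carrying out this cluster-wise normalization when several Jordan blocks of different sizes share the same eigenvalue: one has to perform an indefinite Gram-Schmidt type reduction to separate the $\pm R_{k}$ summands cleanly and to confirm that the resulting signature $\eta = (\eta_1, \ldots, \eta_\alpha)$ is an invariant of the pair $(A,B)$ under admissible transformations. Once this is achieved cluster by cluster, the local normalizations assemble into a single $T_2$ and the theorem follows.
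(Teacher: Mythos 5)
This theorem is not proved in the paper; it is imported verbatim from \cite[Thm.\,5.1.1]{GoLaRod05}, so there is no in-paper argument to compare against. Your sketch does follow the standard route used in Gohberg--Lancaster--Rodman and its predecessors: Jordan-reduce $A$, translate $B$-selfadjointness into the Stein/Sylvester intertwining $J^H\widetilde{B}_0=\widetilde{B}_0 J$, use disjointness of spectra to kill the cross-blocks between clusters with $\overline{\lambda_i}\neq\lambda_j$, and then normalize cluster by cluster via the commutant of $J$. That scaffolding is correct, and your single-block analysis is essentially right (two small slips: the solutions of $J_k(\lambda)^H X = XJ_k(\lambda)$ with $\lambda\in\RR$ are Hankel, not Toeplitz, with zeros strictly above the main anti-diagonal; and once $X$ is both Hermitian and constant on anti-diagonals, \emph{every} entry of $X$ is forced to be real, not just the leading one -- a fact you actually need so that the normalizing Toeplitz $S$ can be taken real).

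The genuine gap is the step you flag but do not carry out: the cluster for a single real eigenvalue $\lambda$ consists in general of \emph{several} Jordan blocks of possibly different sizes, and the corresponding block $\widetilde{B}_\lambda$ of $\widetilde{B}_0$ has nonzero off-diagonal couplings between all pairs of those blocks. Reducing $\widetilde{B}_\lambda$ to the stated direct sum $\eta_{1}R_{p_1}\oplus\cdots\oplus\eta_{r}R_{p_r}$ by a transformation $T_2$ commuting with the Jordan matrix of the cluster is exactly the technical heart of the theorem, and it is not a routine Gram--Schmidt: the relevant ``inner product'' is the pairing on the top anti-diagonals, the commutant is not block-diagonal, and one has to induct over block sizes (largest first), separating one block at a time while preserving both the Jordan structure and nonsingularity of the complement. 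Without executing this recursion you cannot conclude that the blocks decouple, nor that a well-defined sign $\eta_k$ attaches to each real Jordan block. (The analogous multi-block case for a nonreal $\lambda$ is easier since no signs survive, but it still needs to be stated.) As written, the proposal correctly identifies the reduction target and all the auxiliary lemmas, but leaves the decisive decoupling unproved; a complete proof would either fill that in or, as the paper does, simply cite \cite[Thm.\,5.1.1]{GoLaRod05}.
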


The transformation defined in Theorem \ref{thm:cayley} can be interpreted as a $B$-analogue of the well-known Cayley transform \cite{GolVLoan}.

\begin{theorem}[{\cite[Prop.\,4.3.4]{GoLaRod05}}] \label{thm:cayley}
Let $B = B^H \in \Gl_n(\CC)$ and $A \in \mathbb{J}(B)$. Let $w \in \CC$ be a nonreal number with $w \notin \sigma(A)$ and let $\alpha \in \CC$ with $| \alpha | = 1$. Then
\begin{equation} U = \alpha (A - \overline{w} I_n)(A - wI_n)^{-1} \in \mathbb{G}(B) \label{equ:cayley1} \end{equation}
and $\alpha \notin \sigma(U)$.
Conversely, if $U \in \mathbb{G}(B)$, $| \alpha | = 1$ and $\alpha \notin \sigma(U)$, then for any $w \neq \overline{w}$ we have
\begin{equation} A = (wU - \overline{w} \alpha I_n)(U- \alpha I_n)^{-1} \in \mathbb{J}(B) \label{equ:cayley2} \end{equation}
and $w \notin \sigma(A)$. The formulas \eqref{equ:cayley1} and \eqref{equ:cayley2} are inverse to one other.
\end{theorem}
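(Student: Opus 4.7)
The plan is to work entirely through the adjoint operation $A \mapsto A^\star = B^{-1}A^HB$, which by Section \ref{s:basics} is $\RR$-linear, satisfies $(XY)^\star = Y^\star X^\star$, and maps the scalar matrix $\alpha I_n$ to $\overline{\alpha}I_n$. Recall that $\mathbb{J}(B)$ is characterized by $A=A^\star$ and $\mathbb{G}(B)$ by $U^\star = U^{-1}$. The essential observation making all calculations manageable is that every matrix appearing in \eqref{equ:cayley1} or \eqref{equ:cayley2} is a rational function in a single matrix ($A$ in the first case, $U$ in the second), so such factors commute freely.

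For the first direction, I would start with $A=A^\star$ and compute
$$U^\star = \overline{\alpha}\bigl[(A-wI_n)^{-1}\bigr]^\star(A-\overline{w}I_n)^\star = \overline{\alpha}(A-\overline{w}I_n)^{-1}(A-wI_n),$$
and then evaluate $UU^\star$. By commutativity of the four polynomial factors in $A$, the inner matrices cancel in pairs and one obtains $UU^\star = |\alpha|^2 I_n = I_n$, hence $U\in\mathbb{G}(B)$. To rule out $\alpha \in \sigma(U)$, suppose $Uv=\alpha v$ with $v \neq 0$; dividing out $\alpha$ and multiplying by $A-wI_n$ yields $(A-\overline{w}I_n)v = (A-wI_n)v$, so $(w-\overline{w})v=0$, contradicting $w\notin\RR$.

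For the converse, assume $U^\star = U^{-1}$ and compute $A^\star = (U^{-1}-\overline{\alpha}I_n)^{-1}(\overline{w}U^{-1} - w\overline{\alpha}I_n)$. I would then pull a factor of $U$ through both the inverse and the second factor to eliminate $U^{-1}$, arriving at $(-\overline{\alpha}(U-\alpha I_n))^{-1}$ times $(\overline{w}I_n - w\overline{\alpha}U)$ after using $|\alpha|=1$ (so $\overline{\alpha}^{-1}=\alpha$). Commuting the resulting polynomial factors in $U$ and simplifying yields exactly $A^\star = (wU-\overline{w}\alpha I_n)(U-\alpha I_n)^{-1} = A$, so $A\in\mathbb{J}(B)$. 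A direct computation
$$A - wI_n = \bigl[(wU-\overline{w}\alpha I_n) - w(U-\alpha I_n)\bigr](U-\alpha I_n)^{-1} = \alpha(w-\overline{w})(U-\alpha I_n)^{-1}$$
shows $A-wI_n$ is invertible since $w\neq\overline{w}$ and $\alpha\notin\sigma(U)$, so $w\notin\sigma(A)$.

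The last claim, that \eqref{equ:cayley1} and \eqref{equ:cayley2} are inverse to each other, is a direct substitution: plug the expression for $U$ into the formula for $A$ and simplify, using once more that all relevant factors commute as polynomials in $A$; the symmetric substitution handles the other composition. I do not foresee any conceptual obstacle here: the entire argument is a disciplined bookkeeping exercise in which the two nontrivial inputs are (i) the antihomomorphism property of $\star$ together with $(\alpha I_n)^\star = \overline{\alpha}I_n$, and (ii) the identity $\overline{\alpha}\alpha = 1$ that allows factors of $-\overline{\alpha}$ to be extracted cleanly from expressions of the form $I_n-\overline{\alpha}U$. The trickiest bit will likely be keeping track of signs and conjugates when rewriting $\overline{w}I_n - w\overline{\alpha}U$ so that it matches $wU-\overline{w}\alpha I_n$ after the factorization, but this is purely mechanical.
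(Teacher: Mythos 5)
The paper does not prove Theorem \ref{thm:cayley}; it is cited verbatim from \cite[Prop.\,4.3.4]{GoLaRod05}, so there is no internal proof to compare against. Your argument is correct and is the standard verification: the key facts are that $\star$ is an $\RR$-linear antihomomorphism with $(\gamma I_n)^\star = \overline{\gamma}I_n$ (and hence $(\gamma X)^\star = \overline{\gamma}X^\star$ for scalars $\gamma$), that $(X^{-1})^\star = (X^\star)^{-1}$, and that all factors in sight are rational functions of a single matrix and therefore commute. With these, the chain $U^\star = \overline{\alpha}(A-\overline{w}I_n)^{-1}(A-wI_n)$, $UU^\star = I_n$, and (conversely) $A^\star = -\alpha(U-\alpha I_n)^{-1}(\overline{w}I_n - w\overline{\alpha}U) = (wU-\overline{w}\alpha I_n)(U-\alpha I_n)^{-1} = A$ goes through exactly as you outline, and the spectral exclusions $\alpha\notin\sigma(U)$, $w\notin\sigma(A)$ follow from the factorizations $U - \alpha I_n = \alpha(w-\overline{w})(A-wI_n)^{-1}$ and $A - wI_n = \alpha(w-\overline{w})(U-\alpha I_n)^{-1}$, which also immediately give the mutual-inverse claim. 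One small point worth making explicit in a writeup: the conjugate-linearity $(\gamma X)^\star = \overline{\gamma}X^\star$ is not a direct consequence of $\RR$-linearity alone but requires combining $(\gamma I_n)^\star=\overline{\gamma}I_n$ with the antihomomorphism property, which you invoke but do not spell out.
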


\subsection{Symmetric Polynomials and 1-Regularity}
\label{ss:polynomials}

A matrix $A \in \MM_n(\CC)$ is called 1-regular\footnote{A 1-regular matrix is sometimes also called nonderogatory.}, if the eigenspace for any $\lambda \in \sigma(A)$ is one-dimensional. In particular, $A$ is 1-regular if and only if there is only one basic Jordan block for each eigenvalue $\lambda$ in the Jordan decomposition of $A$.  This implies that a diagonalizable matrix is 1-regular if and only if all its eigenvalues are (pairwise) distinct. Moreover, it follows directly that 1-regularity is preserved under similarity transformations.

The following Theorem \ref{thm:1-regular} characterizes 1-regular matrices and is of central importance in the further discussion.

\begin{theorem}[{\cite[Prop.\,1.1.2]{WeyrForm}}] \label{thm:1-regular}
Let $A \in \MM_n(\CC)$. Then each of the following statements is equivalent to $A$ being 1-regular.
\begin{enumerate}[(a)]
\item The centralizer of $A$ in $\MM_n(\CC)$ coincides with $\CC[A]$. That is, for all $A' \in \MM_n(\CC)$ commuting with $A$, there is some polynomial $p(x) \in \CC[x]$ such that $A' = p(A)$.
\item The dimension of $\CC[A]$ equals $n$, i.e. the matrices $I_n, A, A^2, \ldots , A^{n-1}$ are linearly independent over $\CC$.
\end{enumerate}
\end{theorem}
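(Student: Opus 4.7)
The plan is to prove the cycle \emph{$A$ is 1-regular} $\Rightarrow$ (a) $\Rightarrow$ (b) $\Rightarrow$ \emph{$A$ is 1-regular}, which yields both equivalences at once. All three conditions---1-regularity, the centralizer condition in (a), and the dimension condition in (b)---are invariant under similarity, so I may assume without loss of generality that $A$ is already in Jordan canonical form $A = \bigoplus_{i=1}^{r} J_{k_i}(\lambda_i)$ throughout.

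First, for (b) $\Rightarrow$ \emph{1-regular}: the dimension of $\CC[A]$ equals the degree of the minimal polynomial $m_A$, so $\dim \CC[A] = n$ forces $\deg m_A = n = \deg \chi_A$. Together with $m_A \mid \chi_A$ and both being monic, this gives $m_A = \chi_A$. Since $m_A$ is the product of $(x-\lambda)^{e_\lambda}$ with $e_\lambda$ the size of the \emph{largest} Jordan block at $\lambda$, while $\chi_A$ uses the full algebraic multiplicity $\mathfrak{m}(A,\lambda)$, equality forces $e_\lambda = \mathfrak{m}(A,\lambda)$ for each $\lambda$, i.e.\ there is exactly one Jordan block per eigenvalue.

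Next, for \emph{1-regular} $\Rightarrow$ (a): the inclusion $\CC[A] \subseteq C(A)$ (the centralizer) is immediate, so I must show any $A'$ commuting with $A$ lies in $\CC[A]$. By 1-regularity the $\lambda_i$ are pairwise distinct, and the standard Sylvester fact that $XA_1 = A_2X$ has only the trivial solution whenever $\sigma(A_1)\cap\sigma(A_2)=\emptyset$ forces $A'$ to be block-diagonal with each diagonal block $A'_i$ commuting with $J_{k_i}(\lambda_i)$; such $A'_i$ must be upper-triangular Toeplitz. The decisive step is then to construct \emph{one} polynomial $p \in \CC[x]$ satisfying $p(J_{k_i}(\lambda_i)) = A'_i$ for every $i$ simultaneously. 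Writing $p(J_{k_i}(\lambda_i))$ entrywise shows that its $k_i$ Toeplitz entries are controlled by $p(\lambda_i), p'(\lambda_i), \ldots , p^{(k_i-1)}(\lambda_i)$; since the $\lambda_i$ are distinct, Hermite interpolation delivers such a $p$, and hence $A' \in \CC[A]$.

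Finally, (a) $\Rightarrow$ (b) follows from the \emph{a priori} bound $\dim C(A) \geq n$, derivable from the Jordan-form formula $\dim C(A) = \sum_\lambda \sum_{i}(2i-1)n_{i,\lambda}$ with $n_{i,\lambda}$ the block sizes at $\lambda$; combined with $\dim \CC[A] \leq n$ from Cayley--Hamilton, the equality $\CC[A] = C(A)$ in (a) squeezes $\dim \CC[A] = n$. I expect the main obstacle to be the Hermite interpolation step inside \emph{1-regular} $\Rightarrow$ (a), where the simultaneous realization of possibly different Toeplitz patterns at all distinct $\lambda_i$ as a single polynomial in $A$ must be carried out carefully; the remaining implications amount to standard bookkeeping with minimal polynomials and Jordan block data.
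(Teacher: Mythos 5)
The paper does not prove Theorem~\ref{thm:1-regular} itself; it cites the result from \cite[Prop.\,1.1.2]{WeyrForm} and uses it as a black box. So there is no in-paper argument to compare against, and the question is simply whether your proof is sound. It is. The circular chain \emph{1-regular} $\Rightarrow$ (a) $\Rightarrow$ (b) $\Rightarrow$ \emph{1-regular} does deliver both equivalences, and all three conditions are indeed similarity invariants, so passing to Jordan form is legitimate. The step (b) $\Rightarrow$ \emph{1-regular} is a clean minimal-polynomial-versus-characteristic-polynomial argument; the step \emph{1-regular} $\Rightarrow$ (a) correctly combines the Sylvester uniqueness fact (to kill off-diagonal blocks, using that 1-regularity forces distinct $\lambda_i$), the Toeplitz description of the commutant of a single Jordan block, and Hermite interpolation to produce one polynomial hitting all blocks at once; and (a) $\Rightarrow$ (b) correctly squeezes $\dim\CC[A]$ between the Frobenius lower bound $\dim C(A)=\sum_\lambda\sum_i(2i-1)\,n_{i,\lambda}\ge n$ and the Cayley--Hamilton upper bound $\dim\CC[A]\le n$. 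This is a standard and correct proof. One cosmetic remark: you should state explicitly (as you implicitly use) that $\dim\CC[A]=\deg m_A$, since that fact carries the first implication; it follows because $I,A,\dots,A^{d-1}$ with $d=\deg m_A$ is a basis of $\CC[A]$. No genuine gaps otherwise.
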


The next Proposition \ref{prop:commute} shows that, for any given $A \in \mathbb{J}(B)$ we can always find a 1-regular matrix from $\mathbb{J}(B)$ that commutes with $A$. From now one we confine ourselves to the case $B=B^H$ and postpone the discussion of the situation for $B=-B^H$ to Subsection \ref{ss:skewHermitian}.

\begin{proposition} \label{prop:commute}
Let $B = B^H \in \Gl_n(\CC)$ and $A \in \mathbb{J}(B)$. Then there exists a 1-regular matrix $C \in \mathbb{J}(B)$ such that $A$ and $C$ commute.
\end{proposition}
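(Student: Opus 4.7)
The plan is to reduce to a canonical pair form via Theorem \ref{thm:pair-form} and then assemble the commuting matrix block by block. Invoke Theorem \ref{thm:pair-form} to pick $T \in \Gl_n(\CC)$ so that $J := T^{-1}AT$ is in Jordan form and $\widetilde{B} := T^H B T$ has the prescribed block-diagonal structure. By Lemma \ref{lem:switch2}, any 1-regular $\widetilde C \in \mathbb{J}(\widetilde B)$ commuting with $J$ yields $C := T \widetilde C T^{-1} \in \mathbb{J}(B)$ which commutes with $A$, and which is 1-regular since similarity preserves the Jordan structure. The problem thus reduces to constructing $\widetilde C$ on each diagonal block of $(J,\widetilde B)$ separately.

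For a real eigenvalue block $J(\lambda_k)$ of size $p$ paired with $\eta_k R_p$ in $\widetilde B$, Theorem \ref{thm:1-regular}(a) identifies the commutant of the 1-regular matrix $J(\lambda_k)$ with the algebra $\CC[J(\lambda_k)]$. Hence the block takes the form $\widetilde C_k = c_0 I_p + c_1 N_p + \cdots + c_{p-1} N_p^{p-1}$, where $N_p := J_p(0)$. A direct computation shows that $R_p N_p^s$ is the real symmetric matrix supported on the $s$-th anti-diagonal; therefore $R_p \widetilde C_k$ is Hermitian (equivalently $\widetilde C_k \in \mathbb{J}(\eta_k R_p)$) precisely when all coefficients $c_0, \ldots, c_{p-1}$ are real. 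Choosing $c_1 \neq 0$ makes $\widetilde C_k - c_0 I_p$ nilpotent of index exactly $p$, so $\widetilde C_k$ is a single Jordan block with eigenvalue $c_0$.

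For a nonreal eigenvalue block $J(\lambda_k) = J_{\tilde k}(\lambda_k) \oplus J_{\tilde k}(\overline{\lambda_k})$ of size $p = 2\tilde k$ paired with $R_p$, the disjoint spectra of the two summands force any matrix commuting with $J(\lambda_k)$ to be block-diagonal $\widetilde C_k = C_1 \oplus C_2$, with each $C_i$ an upper triangular Toeplitz matrix in $\CC[J_{\tilde k}(\cdot)]$. Writing $C_1 = \sum_s c_s N_{\tilde k}^s$ and using the identity $R_{\tilde k} N_{\tilde k}^T R_{\tilde k} = N_{\tilde k}$, the condition $\widetilde C_k \in \mathbb{J}(R_p)$ reduces to $C_2 = R_{\tilde k} C_1^H R_{\tilde k} = \sum_s \overline{c_s} N_{\tilde k}^s$. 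Thus $C_1$ is free with arbitrary complex coefficients while $C_2$ is then determined, and choosing $c_1 \neq 0$ gives two Jordan blocks of size $\tilde k$ with eigenvalues $c_0$ and $\overline{c_0}$. This coupling between $C_1$ and $C_2$ imposed by $\widetilde B$-selfadjointness is the main technical subtlety; everything else in each block is essentially a matter of unraveling definitions.

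To finish, I would pick the constants $c_0^{(k)}$ so that the collection consisting of the real values $c_0^{(k)}$ from real blocks together with the conjugate pairs $\{c_0^{(k)}, \overline{c_0^{(k)}}\}$ from nonreal blocks (with $c_0^{(k)} \notin \RR$ in the nonreal case) is pairwise distinct. This is possible with ample room to spare. With such a choice, every eigenvalue of $\widetilde C$ lies in a single Jordan block, so $\widetilde C$ is 1-regular; transporting back by $T$ yields the required $C \in \mathbb{J}(B)$ commuting with $A$.
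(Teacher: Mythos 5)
Your proof takes essentially the same route as the paper's: reduce via Theorem~\ref{thm:pair-form} to the pair form $(\widetilde A, \widetilde B)$, build a block-diagonal $\widetilde C \in \mathbb{J}(\widetilde B)$ commuting with $\widetilde A$ whose blocks are (unions of) Jordan blocks with pairwise distinct eigenvalues, and pull back by $T$ using Lemma~\ref{lem:switch2}. The paper simply writes down $\widetilde C = J(a_1) \oplus \cdots \oplus J(a_\beta)$ with distinct values $a_k$ (real for real-eigenvalue blocks, nonreal otherwise) and asserts the required properties, whereas you supply the block-by-block verification by characterizing the commutant of each $J(\lambda_k)$ and unwinding the $\widetilde B$-selfadjointness constraint; it is a more explicit account of the same construction.
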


\begin{proof}
We apply Theorem \ref{thm:pair-form} to $A$ and $B$ (the subscript indicating the size of the Jordan blocks is omitted). So, there exists some $T \in \Gl_n(\CC)$ such that
$$\widetilde{A} := T^{-1}AT =  J(\lambda_1) \oplus \cdots \oplus J(\lambda_{\alpha}) \oplus J(\lambda_{\alpha+1}) \oplus \cdots \oplus J(\lambda_\beta)$$
where $\lambda_1, \ldots , \lambda_{\alpha} \in \RR$ are the real eigenvalues of $A$ and $\lambda_{\alpha +1}, \ldots , \lambda_\beta \in \CC$ are the nonreal eigenvalues of $A$ from the upper half-plane. Moreover,
$$\widetilde{B} :=  T^HBT = \eta_1 P_1 \oplus \cdots \eta_{\alpha} P_{\alpha} \oplus P_{\alpha+1} \oplus \cdots \oplus P_\beta$$
where, for $k=1, \ldots , \beta$, $P_k = R_p$ (the $p \times p$ reverse identity matrix, see \eqref{equ:rev_identity}) with $p \times p$ being the size of $J(\lambda_k)$, and each $\eta_j$ is either $+1$ or $-1$. According to Lemma \ref{lem:switch2} we have $\widetilde{A} \in \mathbb{J}(\widetilde{B})$ since $A \in \mathbb{J}(\B)$. Now let $a_1, \ldots , a_{\alpha} \in \RR$ and $a_{\alpha+1} , \ldots , a_\beta \in \CC \setminus \RR$ be arbitrary and pairwise distinct values and consider
$$ \widetilde{C} := J(a_1) \oplus \cdots \oplus J(a_{\alpha}) \oplus J(a_{\alpha+1}) \oplus \cdots \oplus J(a_\beta)$$
where each $J(a_k)$ has the same size as $J(\lambda_k)$, $k=1, \ldots , \beta$. Observe that $\widetilde{C} \in \mathbb{J}(\widetilde{\B})$ and that $\widetilde{A} \widetilde{C} = \widetilde{C} \widetilde{A}$ holds. Moreover, $\widetilde{C}$ is 1-regular since the values $a_k$ are all distinct. We now apply the reverse transformation to obtain
$$A = T \widetilde{A} T^{-1}, \quad C := T \widetilde{C} T^{-1}, \quad B = T^{-H} \widetilde{B} T^{-1}.$$
Note that now $AC = CA$ holds and that $C \in \mathbb{J}(B)$ (according to Proposition \ref{lem:switch2} since we had $\widetilde{C} \in \mathbb{J}(\widetilde{B})$). Finally, as $\widetilde{C}$ was 1-regular so is $C$.
\end{proof}

Using Corollary \ref{cor:switch1}, the result from Proposition \ref{prop:commute} can easily be extended to $\mathbb{L}(B)$.

\begin{corollary}
Let $B = B^H \in \Gl_n(\CC)$ and $A \in \mathbb{L}(B)$. Then there exists a 1-regular matrix $C \in \mathbb{L}(B)$ such that $A$ and $C$ commute.
\end{corollary}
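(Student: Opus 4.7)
The plan is to reduce the claim directly to Proposition \ref{prop:commute} via the multiplication-by-$i$ switch from Corollary \ref{cor:switch1}.

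Given $A \in \mathbb{L}(B)$, I would first invoke Corollary \ref{cor:switch1} to conclude that $iA \in \mathbb{J}(B)$. Applying Proposition \ref{prop:commute} to this $B$-selfadjoint matrix produces a 1-regular matrix $C' \in \mathbb{J}(B)$ with $(iA)C' = C'(iA)$. Cancelling the scalar $i$ gives $AC' = C'A$.

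Now set $C := iC'$. By Corollary \ref{cor:switch1} again, $C \in \mathbb{L}(B)$. Commutativity $AC = CA$ is immediate from $AC' = C'A$ since multiplication by the scalar $i$ commutes with everything. It remains only to observe that $C = iC'$ is 1-regular: an eigenvector equation $C'v = \mu v$ is equivalent to $Cv = i\mu v$, so the eigenspaces of $C$ and $C'$ agree pointwise (only the labels of the eigenvalues are scaled by $i$), and in particular they share the same geometric multiplicities. Thus $C$ inherits 1-regularity from $C'$.

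There is no real obstacle here — the only thing to double-check is that the scalar $i$ does not disturb either the commutation relation or 1-regularity, both of which are trivial. The proof is essentially a one-line corollary of Proposition \ref{prop:commute} via the involution $X \mapsto iX$ swapping $\mathbb{J}(B)$ and $\mathbb{L}(B)$.
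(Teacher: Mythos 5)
Your proof is correct and follows essentially the same route as the paper: apply Corollary \ref{cor:switch1} to pass to $iA \in \mathbb{J}(B)$, invoke Proposition \ref{prop:commute}, and switch back with multiplication by $i$, noting that 1-regularity is unaffected. The paper states this more tersely but the argument is the same.
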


\begin{proof}
This follows immediately from Corollary \ref{cor:switch1} and Proposition \ref{prop:commute}. Additionally, notice that $C \in \MM_n(\CC)$ is 1-regular if and only if $iC$ is 1-regular.
\end{proof}

A polynomial $p(x_1, \ldots , x_n) \in \CC[x_1, \ldots , x_n]$ in $n \geq 1$ unknowns is called symmetric if
$$ p(x_1, \ldots , x_n) = p(x_{\tau(1)}, \ldots , x_{\tau(n)})$$
holds for all permutations $\tau$ of $1,2, \ldots , n$. The following Theorem \ref{thm:main-symm} will be central in the next section.

\begin{theorem}[{\cite[Prop.\,7.1.10]{WeyrForm}}] \label{thm:main-symm}
Let $p(x_1, \ldots , x_n) \in \CC[x_1, \ldots , x_n]$ be a symmetric polynomial and $f: \MM_n(\CC) \rightarrow \CC$ be a function given by
$$ f(A) = p \big( \lambda_1(A), \lambda_2(A), \ldots , \lambda_n(A) \big) =: p(A)$$
where $\lambda_k(A), k=1, \ldots , n$, denote the eigenvalues of $A$.
Then there is a polynomial $q(x_{11}, x_{12}, \ldots , x_{nn}) \in \CC[x_{11}, x_{12}, \ldots , x_{nn}]$ in $n^2$ unknowns such that
$$ \quad f(A) = q \big( a_{11}, a_{12}, \ldots , a_{nn} \big) \quad \forall \; A = [a_{i,j}]_{i,j} \in \MM_n(\CC).$$
\end{theorem}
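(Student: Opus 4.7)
The plan is to reduce the statement to the fundamental theorem of symmetric polynomials together with the basic observation that the coefficients of the characteristic polynomial of $A$ are polynomial expressions in the entries of $A$.

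First I would recall the fundamental theorem of symmetric polynomials: every symmetric polynomial $p(x_1,\ldots,x_n) \in \CC[x_1,\ldots,x_n]$ can be written as a polynomial in the elementary symmetric polynomials $e_1,\ldots,e_n$, i.e. there exists $\tilde{p} \in \CC[y_1,\ldots,y_n]$ with
$$ p(x_1,\ldots,x_n) \;=\; \tilde{p}\bigl(e_1(x_1,\ldots,x_n),\,\ldots,\,e_n(x_1,\ldots,x_n)\bigr). $$
This is proved by the standard argument of peeling off the leading monomial in lexicographic order and inducting on the monomial ordering; since the proof is classical I would simply cite it rather than reproduce it.

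Second, I would expand the characteristic polynomial of $A \in \MM_n(\CC)$:
$$ \det(xI_n - A) \;=\; \prod_{k=1}^n (x-\lambda_k(A)) \;=\; \sum_{k=0}^{n} (-1)^k\, e_k\bigl(\lambda_1(A),\ldots,\lambda_n(A)\bigr)\, x^{n-k}. $$
On the other hand, expanding $\det(xI_n - A)$ directly via the Leibniz formula (or via the expansion into principal minors) shows that each coefficient of $x^{n-k}$ is a polynomial in the entries $a_{ij}$ of $A$ with integer coefficients. Equating the two expressions, each $e_k(\lambda_1(A),\ldots,\lambda_n(A))$ is a polynomial $q_k(a_{11},\ldots,a_{nn}) \in \CC[x_{11},\ldots,x_{nn}]$ in the entries of $A$.

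Finally, composing the two reductions,
$$ f(A) \;=\; p\bigl(\lambda_1(A),\ldots,\lambda_n(A)\bigr) \;=\; \tilde{p}\bigl(q_1(a_{11},\ldots,a_{nn}),\ldots,q_n(a_{11},\ldots,a_{nn})\bigr), $$
which exhibits $f$ as a polynomial in the $n^2$ entries of $A$, giving the desired $q \in \CC[x_{11},\ldots,x_{nn}]$. The only genuine non-trivial ingredient is the fundamental theorem of symmetric polynomials, but since it is a standard result I expect no serious obstacle in the proof; the main pedagogical subtlety is simply making precise that the eigenvalues $\lambda_k(A)$, while not individually polynomial functions of the entries, enter into $f$ only through symmetric expressions which are exactly the coefficients of $\det(xI_n - A)$.
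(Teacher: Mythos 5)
Your proposal is correct and follows essentially the same route as the paper: expand $\det(xI_n-A)$ both via the Leibniz formula (yielding polynomials in the entries $a_{ij}$) and via $\prod_k(x-\lambda_k)$ (yielding elementary symmetric polynomials in the eigenvalues), compare coefficients, and then invoke the fundamental theorem of symmetric polynomials to reduce a general symmetric $p$ to a polynomial in the elementary symmetric functions. No meaningful differences between your argument and the one given in the paper.
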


\begin{proof}
The proof follows the one from \cite{WeyrForm}. Let $A = [a_{i,j}]_{i,j} \in \MM_n(\CC)$ and let $\chi_A(x) = \det(x I_n - A)$ be the characteristic polynomial of $A$. Assume that
\begin{equation} \chi_A(x) = x^n+  c_1x^{n-1} + \cdots + c_{n-1}x + c_n. \label{equ:sympol_1} \end{equation}
for some $c_k \in \CC, k=1, \ldots , n$. It is well known that $c_1, \ldots , c_n$ are polynomials in the entries $a_{ij}, 1 \leq i,j \leq n$, of $A$.

Furthermore, let $\lambda_1, \ldots , \lambda_n \in \CC$ denote the eigenvalues of $A$. Then
$ \chi_A(x) = (x-\lambda_1)(x- \lambda_2) \cdots (x- \lambda_n)$ and its expansion gives
\begin{equation} \chi_A(x) = x^n - s_1 x^{n-1} + s_2 x^{n-2} + \cdots + (-1)^n s_n \label{equ:sympol_2} \end{equation}
for the coefficients $s_1, \ldots , s_n \in \CC$. A closer inspection reveals that $s_1, \ldots , s_n$ are given by the $n$ elementary symmetric polynomials in $\lambda_1, \ldots , \lambda_n$ (see also \cite[Sec.\,2]{Jacob}), that is
\begin{equation} s_1 = \sum_{i} \lambda_i, \;\; s_2 = \sum_{i < j} \lambda_i \lambda_j, \;\; s_3 = \sum_{i < j < k} \lambda_i \lambda_j \lambda_k, \;  \ldots \; , s_n = \prod_{i} \lambda_i. \label{equ:sympol_3} \end{equation}
A comparison of coefficients in \eqref{equ:sympol_1} and \eqref{equ:sympol_2} yields: each elementary symmetric polynomial $s_k = s_k(\lambda_1, \ldots , \lambda_n)$ in \eqref{equ:sympol_3} in the eigenvalues of $A$ agrees with a certain polynomial $c_k = p_k(a_{11}, a_{12}, \ldots , a_{nn})$ in the entries $a_{ij}$ of $A$. In consequence, this is true for any symmetric polynomial $q(\lambda_1, \ldots , \lambda_n)$ since $q$ can always be expressed as a polynomial in $s_1, \ldots , s_n$ (cf. \cite[Thm.\,2.20]{Jacob}).
\end{proof}

\begin{example} \label{ex:main-symm}
We give three applications of Theorem \ref{thm:main-symm} that will be important in the further discussion. Each of these examples can be found in \cite[Sec.~7]{WeyrForm}. First, let $A =[a_{ij}]_{ij} \in \MM_n(\CC)$ with eigenvalues $\lambda_1(A), \ldots , \lambda_n(A)$.
\begin{enumerate}[$(i)$]
\item  The function $p: \MM_n(\CC) \rightarrow \CC$ given by
$$p(A) := p(\lambda_1(A), \ldots , \lambda_n(A)) = \prod_k \lambda_k(A)$$
determines whether $A$ is invertible. That is, $p(A)=0$ if $A$ is singular and $p(A) \neq 0$ otherwise. As $p$ is a symmetric polynomial in $\lambda_1(A), \ldots , \lambda_n(A)$, there is some $q(x_{11}, \ldots , x_{nn})$ is $n^2$ unknowns with $q(A) := q(a_{11}, a_{12}, \ldots , a_{nn}) = p(A)$ for all $A \in \MM_n(\CC)$. The polynomial $q$ in $a_{11}, a_{12}, \ldots , a_{nn}$ is given by the determinant.
\item The function $p: \MM_n(\CC) \rightarrow \CC$ given by
$$p(A) := p(\lambda_1(A), \ldots , \lambda_n(A)) = \prod_{k \neq j} (\lambda_k(A) - \lambda_j(A)),$$
determines whether $A$ has a multiple eigenvalue. That is, $p(A) \neq 0$ if all eigenvalues of $A$ are (pairwise) distinct and $p(A) = 0$ otherwise. Notice that $p(\lambda_1(A), \ldots , \lambda_n(A))$ is a symmetric polynomial in $\lambda_1(A), \ldots , \lambda_n(A)$.
According to Theorem \ref{thm:main-symm}, there exists a polynomial $q \in \CC[x_{11},  \ldots , x_{nn}]$ in $n^2$ unknowns such that
$$q(A) = q(a_{11}, a_{12}, \ldots , a_{nn}) = p(\lambda_1(A), \ldots , \lambda_n(A)) = p(A)$$
for all $A = [a_{ij}]_{ij} \in \MM_n(\CC)$.
\item Now assume $A =[a_{ij}]_{ij} \in \MM_{m \times n}(\CC)$. Recall that a $k \times k$ minor of $A$ is the determinant of a submatrix $A' \in \MM_{k\times k}(\CC)$ obtained from $A$ by deleting $m-k$ rows and $n-k$ columns. Thus (the value of) each minor is expressible as a polynomial in some entries of $A$ according to $(i)$. Now note that for $A$ the fact $\rank(A) \leq r < \min \lbrace m,n \rbrace$ is equivalent to the vanishing of all $(r+1) \times (r+1)$ minors of $A$. As there are $$s := \binom{m}{r+1} \cdot \binom{n}{r+1}$$ such minors of $A$, according to Theorem \ref{thm:main-symm}, there are $s$ polynomials $q_k(x_{11}, \ldots , x_{mn}), 1 \leq k \leq s$, in $mn$ unknowns $x_{11}, x_{12}, \ldots , x_{mn}$ such that
    $ q_k(A) := q_k(a_{11}, a_{12}, \ldots , a_{mn}) = 0$ holds for all $k = 1, \ldots , s$
if and only if $\rank(A) \leq r$. In particular, if $\rank(A) > r$, there is at least one $q_k$ such that $q_k(A) = q_k(a_{11}, \ldots , a_{mn}) \neq 0$.
\end{enumerate}
Do not overlook that, in each case considered in Example \ref{ex:main-symm}, the polynomials that have been determined do not depend on a special matrix.
\end{example}

Recall that, according to Theorem \ref{thm:1-regular}, $A \in \MM_n(\CC)$ is 1-regular if and only if $I_n,A,A^2, \ldots , A^{n-1}$ are linearly independent. In turn this is the case if and only if the matrix
$$ M := \begin{bmatrix} | & | & | & & | \\ \hphantom{x}I_n\hphantom{i} & A & A^2 & \cdots & A^{n-1} \\  | & | & | & & | \end{bmatrix} \in \MM_{n^2 \times n}(\CC),$$
whose columns are $I_n,A,A^2, \ldots$ written in vectorized fashion as $n^2 \times 1$ column vectors, has full rank (i.e. $\rank(M) = n$). Certainly, the entries of $M$ are polynomials in the entries of $A$. The matrix $M$ has rank $\leq n-1$ if and only if all $n \times n$ minors of $M$ vanish simultaneously. Taking Example \ref{ex:main-symm} $(iii)$ into account we have the following result:

\begin{corollary} \label{cor:1-regular}
There exists a collection of $s \geq 1$ (nonzero) polynomials $w_k(x_{11}, \ldots , x_{nn}) \in \CC[x_{11}, x_{12}, \ldots , x_{nn}]$ in $n^2$ unknowns such that $w_k(A) = w_k(a_{11}, \ldots , a_{nn}) = 0$ holds for $A = [a_{ij}]_{ij} \in \MM_n(\CC)$ and all $w_k, k=1, \ldots , s$, if and only if $A$ is not 1-regular.
\end{corollary}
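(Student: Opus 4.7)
The plan is to assemble the statement directly from Theorem~\ref{thm:1-regular}(b) and Example~\ref{ex:main-symm}(iii), applied to the matrix $M$ displayed just above the corollary. I would not attempt any new construction; the ingredients are already in place.

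First I would invoke Theorem~\ref{thm:1-regular}(b) to reduce 1-regularity of $A$ to linear independence of $I_n, A, A^2, \ldots, A^{n-1}$ over $\CC$, and then rephrase this as the rank condition $\rank(M) = n$ for the $n^2 \times n$ matrix $M$ whose $k$-th column is the vectorization of $A^{k-1}$. Next I would observe that every entry of $A^j$ is, by the definition of matrix multiplication, a polynomial in the $n^2$ entries $a_{11}, \ldots, a_{nn}$ of $A$; hence there exist polynomials $m_{ij}(x_{11}, \ldots, x_{nn}) \in \CC[x_{11}, \ldots, x_{nn}]$ such that the $(i,j)$-entry of $M$ equals $m_{ij}(a_{11}, \ldots, a_{nn})$.

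Then I would apply Example~\ref{ex:main-symm}(iii) to the matrix $M$ with $r = n-1$: the failure of $A$ to be 1-regular is equivalent to the simultaneous vanishing of all $s := \binom{n^2}{n}$ of the $n \times n$ minors of $M$, each of which is a polynomial $\tilde{q}_k$ in the entries of $M$. Substituting the polynomial expressions $m_{ij}$ for the entries of $M$ into each $\tilde{q}_k$ yields polynomials
\[
w_k(x_{11}, \ldots, x_{nn}) := \tilde{q}_k\bigl(m_{11}(x_{11}, \ldots, x_{nn}), \ldots, m_{n^2,n}(x_{11}, \ldots, x_{nn})\bigr) \in \CC[x_{11}, \ldots, x_{nn}]
\]
in $n^2$ unknowns, whose simultaneous vanishing at $(a_{11}, \ldots, a_{nn})$ is equivalent to $A$ failing to be 1-regular. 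Finally, I would note that at least one $w_k$ is a nonzero polynomial: the companion matrix of, say, $x^n$ is 1-regular, so not all $w_k$ can be identically zero, and we may discard the zero ones from the collection.

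There is no real obstacle here; the only mild subtlety is bookkeeping, namely that composition of polynomials returns a polynomial (so each $w_k$ lies in $\CC[x_{11}, \ldots, x_{nn}]$), and that the collection can be taken nonzero by exhibiting a single 1-regular matrix to rule out the case $w_k \equiv 0$ for every $k$.
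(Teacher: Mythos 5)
Your proposal is correct and follows essentially the same route as the paper, which presents this corollary as an immediate consequence of the preceding paragraph: Theorem~\ref{thm:1-regular}(b) reduces 1-regularity to the rank condition $\rank(M)=n$, the entries of $M$ are polynomials in the $a_{ij}$, and Example~\ref{ex:main-symm}(iii) with $r=n-1$ supplies the minors-as-polynomials. Your added remark that a single 1-regular matrix (e.g.\ the companion matrix of $x^n$) forces some $w_k$ to be a nonzero polynomial is a small but welcome detail that the paper leaves implicit.
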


In other words, Corollary \ref{cor:1-regular} states that $w_\ell(A) \neq 0 $ for at least one $\ell, 1 \leq \ell \leq s,$ is sufficient for $A \in \MM_n(\CC)$ to be 1-regular. This fact will be very useful for the proofs in the upcoming section.

\section{Density of diagonalizable Matrices}
\label{s:density}

In this section we prove our main theorems on the density of diagonalizable matrices in the sets $\mathbb{J}(B)$, $\mathbb{L}(B)$, $\mathbb{G}(B)$ and $\mathcal{N}(B)$. For $B=B^H$ the Lie and Jordan algebras are treated in Subsection \ref{ss:selfskew} whereas the set of $B$-unitary matrices and the set of $B$-normal matrices are considered in Subsections \ref{ss:automorph} and \ref{ss:normal}, respectively. The case $B=-B^H$ is considered in Subsection \ref{ss:skewHermitian}.

\subsection{The Lie and Jordan Algebras}
\label{ss:selfskew}
We begin by considering the density of diagonalizable matrices in the sets $\mathbb{J}(B)$ and $\mathbb{L}(B)$ of $B$-selfadjoint and $B$-skewadjoint matrices. The result of Proposition \ref{prop:one-matrix} will be central for the proof of Theorem \ref{thm:dense-subspaces}.

\begin{proposition} \label{prop:one-matrix}
Let $B = B^H \in \MM_n(\CC)$ be nonsingular. Then $\mathbb{J}(B)$ and $\mathbb{L}(B)$ contain a matrix with pairwise distinct eigenvalues.
\end{proposition}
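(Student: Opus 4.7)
The plan is to reduce the problem to the diagonal case via a congruence transformation, then exhibit an explicit diagonal matrix with distinct eigenvalues in the resulting Jordan algebra.

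First, I would invoke the fact that any Hermitian $B \in \Gl_n(\CC)$ is congruent to a signature matrix $D = \diag(\epsilon_1, \ldots, \epsilon_n)$ with $\epsilon_j \in \{+1,-1\}$. This is either Sylvester's law of inertia or, equivalently, Theorem \ref{thm:pair-form} applied to the trivial pair $(0,B)$: since the zero matrix has only $1 \times 1$ Jordan blocks, the blocks $P_k$ in $\widetilde{B}$ all have size one, and $\widetilde{B}$ is diagonal with $\pm 1$ entries. Thus there exists $T \in \Gl_n(\CC)$ with $T^H B T = D$.

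Second, I would simply write down a candidate. Set $\widetilde{A} := \diag(1,2,\ldots,n)$. Since $\widetilde{A}$ is a real diagonal matrix, $\widetilde{A}^H = \widetilde{A}$, and since diagonal matrices commute, $D^{-1} \widetilde{A}^H D = D^{-1} \widetilde{A} D = \widetilde{A}$. Hence $\widetilde{A} \in \mathbb{J}(D)$, and its eigenvalues $1, 2, \ldots, n$ are pairwise distinct.

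Third, I would transform back. Define $A := T \widetilde{A} T^{-1}$. By Lemma \ref{lem:switch2}, $A \in \mathbb{J}(B)$ because $\widetilde{A} \in \mathbb{J}(D) = \mathbb{J}(T^H B T)$, and similarity transformations preserve the spectrum, so $A$ also has pairwise distinct eigenvalues $1,2,\ldots,n$. For the Lie algebra, Corollary \ref{cor:switch1} yields $iA \in \mathbb{L}(B)$, and multiplication by $i$ sends the distinct eigenvalues $1,\ldots,n$ to the distinct eigenvalues $i, 2i, \ldots, ni$, completing the argument.

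There is no genuine obstacle here: the statement is essentially a consequence of Sylvester's inertia theorem combined with the transport lemma (Lemma \ref{lem:switch2}). The only conceptual step is recognizing that after congruence to a signature matrix, the membership condition for $\mathbb{J}(D)$ on a real diagonal matrix trivializes.
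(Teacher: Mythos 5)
Your proof is correct and follows essentially the same route as the paper's: congruence of $B$ to a signature matrix via Sylvester's inertia theorem, a real diagonal matrix with distinct entries as the witness in $\mathbb{J}$ of the signature matrix, transport back via Lemma~\ref{lem:switch2}, and multiplication by $i$ for the $\mathbb{L}$ case. The only cosmetic differences are your concrete choice $\diag(1,\dots,n)$ in place of arbitrary distinct reals and your invocation of Corollary~\ref{cor:switch1} rather than directly inserting $i\alpha_j$ on the diagonal.
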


\begin{proof}
Let $(m_{-},m_{+})$ be the inertia\footnote{As $B \in \Gl_n(\CC)$, we have $0 \notin \sigma(B)$. Moreover, as $B=B^H$, all eigenvalues of $B$ are real. In conclusion $B$ has only positive and negative real eigenvalues.} of $B$, that is, denote the number of positive real eigenvalues of $B$ by $m_{+}$ and the number of negative real eigenvalues of $B$ by $m_{-}$. According to a Theorem of Sylvester (cf. \cite[Thm.\,4.5.7]{HoJo}) there exists some $Q \in \Gl_n(\CC)$ such that
$$ B' := Q^HBQ = \begin{bmatrix} -I_{m_{-}} & \\ & I_{m_+} \end{bmatrix}.$$
Now for any $n$ distinct values $\alpha_1, \ldots , \alpha_n \in \RR$ the matrix $$D = \diag(\alpha_1, \ldots , \alpha_n) \in \MM_n(\CC)$$  is in $\mathbb{J}(B')$. Thus $A := QDQ^{-1}$ is in $\mathbb{J}(Q^{-H}(Q^HBQ)Q^{-1}) = \mathbb{J}(B)$ according to Lemma \ref{lem:switch2} and has pairwise distinct eigenvalues. The same statement for $\mathbb{L}(B)$ follows by taking the diagonal entries $i\alpha_1, \ldots , i\alpha_n$ for $D$.
\end{proof}

We now prove the main theorem of this section. Its proof makes use of the fact observed in Example \ref{ex:main-symm} $(ii)$. With the use of Proposition \ref{prop:one-matrix} it would also follow from  \cite[Cor.\,1]{Hartfiel}.

\begin{theorem} \label{thm:dense-subspaces}
Let $B = B^H \in \Gl_n(\CC)$.
\begin{enumerate}[(a)]
\item For any $J \in \mathbb{J}(B)$ and any $\varepsilon > 0$ there exists some diagonalizable $J' \in \mathbb{J}(B)$ such that $\Vert J-J' \Vert_2 < \varepsilon$. Moreover, $J'$ can be chosen to have pairwise distinct eigenvalues.
\item For any $L \in \mathbb{L}(B)$ and any $\varepsilon > 0$ there exists some diagonalizable $L' \in \mathbb{L}(B)$ such that $\Vert L-L' \Vert_2 < \varepsilon$. Moreover, $L'$ can be chosen to have pairwise distinct eigenvalues.
\end{enumerate}
\end{theorem}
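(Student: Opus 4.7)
Given $J \in \mathbb{J}(B)$ and $\varepsilon > 0$, invoke Proposition \ref{prop:one-matrix} to fix some $D \in \mathbb{J}(B)$ having pairwise distinct eigenvalues. Since $\mathbb{J}(B)$ is an $\RR$-subspace of $\MM_n(\CC)$ (Lemma \ref{lem:R-subspaces}), the line $J_t := J + tD$ lies entirely in $\mathbb{J}(B)$ for every $t \in \RR$. The plan is to produce a small $t^* \in \RR$ for which $J_{t^*}$ has pairwise distinct eigenvalues, and then take $J' := J_{t^*}$.

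By Example \ref{ex:main-symm}$(ii)$ (which rests on Theorem \ref{thm:main-symm}) there is a polynomial $q(x_{11}, \ldots , x_{nn}) \in \CC[x_{11},\ldots,x_{nn}]$ such that, for any $A \in \MM_n(\CC)$, the value of $q$ at the entries of $A$ is nonzero precisely when $A$ has pairwise distinct eigenvalues. Because the entries of $J_t$ depend linearly on $t$, the evaluation $f(t) := q(J_t)$ is a polynomial in the single variable $t$. Once it is known that $f$ is not the zero polynomial, $f$ has only finitely many roots; one can then pick $t^* \in \RR$ with $0 < |t^*| < \varepsilon / \Vert D \Vert_2$ and $f(t^*) \neq 0$, whereupon $J' := J_{t^*} \in \mathbb{J}(B)$ has pairwise distinct eigenvalues (hence is diagonalizable) and $\Vert J - J' \Vert_2 = |t^*|\, \Vert D \Vert_2 < \varepsilon$.

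The sole nontrivial step, and the main obstacle, is showing $f \not\equiv 0$. I would handle this via the behaviour of $J_t$ as $|t| \to \infty$: writing $J_t = t\,(D + t^{-1}J)$, the matrix $D + t^{-1}J$ tends entrywise to $D$, and the discriminant of the characteristic polynomial is itself a polynomial in the entries. Since this discriminant is nonzero at $D$ (because $D$ has pairwise distinct eigenvalues), it remains nonzero at $D + t^{-1}J$ for all sufficiently large $|t|$. Multiplying by the nonzero scalar $t$ preserves distinctness of eigenvalues, hence $J_t$ itself has pairwise distinct eigenvalues for all sufficiently large $|t| \in \RR$. At such values $f(t) \neq 0$, so $f$ cannot be identically zero.

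Part (b) is handled in an identical manner, starting from a matrix $D \in \mathbb{L}(B)$ with pairwise distinct eigenvalues (again furnished by Proposition \ref{prop:one-matrix}). Alternatively it reduces cleanly to (a) via Corollary \ref{cor:switch1}: given $L \in \mathbb{L}(B)$, the matrix $iL$ belongs to $\mathbb{J}(B)$, so (a) produces a diagonalizable $J' \in \mathbb{J}(B)$ with pairwise distinct eigenvalues and $\Vert iL - J' \Vert_2 < \varepsilon$; setting $L' := -iJ'$, one has $L' \in \mathbb{L}(B)$ (by Corollary \ref{cor:switch1} together with the $\RR$-linearity of $\mathbb{L}(B)$), $L'$ is diagonalizable with pairwise distinct eigenvalues, and $\Vert L - L' \Vert_2 = \Vert iL - J' \Vert_2 < \varepsilon$ since multiplication by a unit-modulus scalar preserves the $2$-norm.
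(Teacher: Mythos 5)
Your proposal is correct and follows essentially the same route as the paper: fix a matrix with distinct eigenvalues from Proposition \ref{prop:one-matrix}, perturb $J$ along a real line inside the $\RR$-subspace $\mathbb{J}(B)$, and use the single-variable polynomial obtained from Example \ref{ex:main-symm}$(ii)$ to find a suitable small perturbation. The only cosmetic difference is how you show the restricted polynomial is not identically zero: the paper parametrizes as $M(z) = zJ + E$ so that $\widetilde{q}(0) = q(E) \neq 0$ is immediate, whereas you parametrize as $J + tD$ and argue $f(t) \neq 0$ for large $|t|$ via continuity of the discriminant at $D$; both are valid and interchangeable, as $J + cE = c(c^{-1}J + E)$ links them.
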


\begin{proof}
Let $A = [a_{ij}]_{ij} \in \MM_n(\CC)$. Recall that, according to Example \ref{ex:main-symm} $(ii)$, there is some polynomial $$q(x_{11}, \ldots , x_{nn}) \in \CC[x_{11}, x_{12}, \ldots , x_{nn}]$$ in $n^2$ unknowns $x_{ij}$, $1 \leq i,j \leq n$, such that $q(A) = q(a_{11}, \ldots , a_{nn})=0$ if and only if $A$ has a (i.e. at least one) multiple eigenvalue. Otherwise, that is if all eigenvalues of $A$ are (pairwise) distinct, $q(A) \neq 0$.

(a) Now assume $J \in \mathbb{J}(B)$. According to Proposition \ref{prop:one-matrix}, there is some $E \in \mathbb{J}(B)$ with distinct eigenvalues. Now consider the family of matrices $M(z) := zJ+E$, $z \in \CC$, and the polynomial $\widetilde{q}(z) := q(M(z)) = q(zJ+E) \in \CC[z]$ that only depends on the single variable $z$. Certainly $\widetilde{q}(0) = q(E) \neq 0$ since the eigenvalues of $E$ are distinct. Therefore,  $\widetilde{q} \neq 0$ and $\widetilde{q}$ is not the zero-polynomial. As $\widetilde{q}(z)$ has only a finite number of roots, almost all matrices $M(z_0), z_0 \in \CC$, have distinct eigenvalues. Consequently, the same holds for all $J+cE = c(c^{-1}J+E)$, $c \in \CC$. To guarantee that $J+cE \in \mathbb{J}(B)$, we confine ourselves to the case $c \in \RR$ (see Lemma \ref{lem:R-subspaces}).

Now let $\varepsilon > 0$ be given and choose some $c_0 \in \RR$ with $| c_0 | < \varepsilon / \Vert E \Vert_2$ such that $\widetilde{q}(c_0^{-1}) \neq 0$. Then $J' := J + c_0E \in \mathbb{J}(B)$, $J'$ has $n$ distinct eigenvalues and
$$\Vert J - J' \Vert_2 = | c_0 | \cdot \Vert E \Vert_2 < \frac{\varepsilon}{\Vert E \Vert_2} \cdot \Vert E \Vert_2 = \varepsilon.$$
Moreover, as $J'$ has pairwise distinct eigenvalues it is diagonalizable.

(b) The proof for $L \in \mathbb{L}(B)$ proceeds along the same lines.
\end{proof}

\begin{corollary} \label{cor:denseJL}
Let $B=B^H \in \Gl_n(\CC)$. The set of matrices with pairwise distinct eigenvalues is dense in $\mathbb{J}(B)$ and $\mathbb{L}(B)$.
\end{corollary}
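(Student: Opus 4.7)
The plan is to observe that this corollary is essentially a topological rephrasing of the second half of Theorem \ref{thm:dense-subspaces}, which already provides, for every $J \in \mathbb{J}(B)$ (respectively $L \in \mathbb{L}(B)$) and every $\varepsilon > 0$, an approximant $J' \in \mathbb{J}(B)$ (respectively $L' \in \mathbb{L}(B)$) with pairwise distinct eigenvalues and $\Vert J - J' \Vert_2 < \varepsilon$. Thus no new technical input is really required.

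To make this rigorous, I would first denote by $\mathcal{D}_{\mathbb{J}} \subseteq \mathbb{J}(B)$ the subset of matrices with pairwise distinct eigenvalues, and similarly $\mathcal{D}_{\mathbb{L}} \subseteq \mathbb{L}(B)$. By the definition of the subspace topology on $\mathbb{J}(B)$ (and on $\mathbb{L}(B)$) inherited from $\MM_n(\CC)$, density of $\mathcal{D}_{\mathbb{J}}$ in $\mathbb{J}(B)$ is equivalent to the statement that every open ball $B_\varepsilon(J) \cap \mathbb{J}(B)$ around a point $J \in \mathbb{J}(B)$ contains an element of $\mathcal{D}_{\mathbb{J}}$. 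This is exactly what Theorem \ref{thm:dense-subspaces}(a) furnishes. The same reasoning, applied to $\mathbb{L}(B)$, uses Theorem \ref{thm:dense-subspaces}(b).

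There is no real obstacle here since the essential content has been proved; the only thing to be careful about is to refer to the ``Moreover'' clauses of Theorem \ref{thm:dense-subspaces} rather than only to the diagonalizability statement (which by itself would give a weaker density claim, concerning all diagonalizable matrices rather than those with pairwise distinct eigenvalues). Consequently, the proof will be just a short invocation of Theorem \ref{thm:dense-subspaces} for both cases (a) and (b), noting that matrices with pairwise distinct eigenvalues are in particular diagonalizable, so the corollary in fact strengthens the main density statement for $\mathbb{J}(B)$ and $\mathbb{L}(B)$.
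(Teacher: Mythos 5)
Your proposal is correct and matches the paper's intent: Corollary \ref{cor:denseJL} is stated without a separate proof precisely because it is an immediate consequence of the ``Moreover'' clauses of Theorem \ref{thm:dense-subspaces}, translated into the language of density in the subspace topology. You correctly flag the one point worth being careful about, namely that the diagonalizability part of Theorem \ref{thm:dense-subspaces} alone would give only the weaker claim about diagonalizable matrices, so the pairwise-distinct-eigenvalues refinement is essential.
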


The proof of Theorem \ref{thm:dense-subspaces} also reveals that any matrix from $\mathbb{J}(B)$ or $\mathbb{L}(B)$ can always be expressed as a sum of matrices with pairwise distinct eigenvalues from the same class.

\begin{corollary} \label{cor:sumoftwo}
Let $B=B^H \in \Gl_n(\CC)$. Then every matrix in $\mathbb{J}(B)$ can be expressed as a sum of two diagonalizable matrices from $\mathbb{J}(B)$ with pairwise distinct eigenvalues. The same holds for $\mathbb{L}(B)$.
\end{corollary}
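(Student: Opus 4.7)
The plan is to simply reuse, almost verbatim, the construction that was already used in the proof of Theorem \ref{thm:dense-subspaces}. The key observation is that both summands produced there automatically have pairwise distinct eigenvalues, so no additional work is required beyond a small bookkeeping step.

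In detail, given $J \in \mathbb{J}(B)$, I would first invoke Proposition \ref{prop:one-matrix} to obtain some $E \in \mathbb{J}(B)$ with pairwise distinct eigenvalues. As in the proof of Theorem \ref{thm:dense-subspaces}, consider the polynomial $\widetilde{q}(z) := q(zJ + E) \in \CC[z]$, where $q$ is the $n^2$-variate symmetric-polynomial witness from Example \ref{ex:main-symm}~$(ii)$ that detects repeated eigenvalues. Since $\widetilde{q}(0) = q(E) \neq 0$, the polynomial $\widetilde{q}$ is nonzero and hence has only finitely many roots. Therefore one can choose some nonzero $c \in \RR$ with $\widetilde{q}(c^{-1}) \neq 0$, which forces $J + cE$ to have $n$ pairwise distinct eigenvalues.

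Now write $J = (J + cE) + (-cE)$. By Lemma \ref{lem:R-subspaces}, both $J + cE$ and $-cE$ belong to $\mathbb{J}(B)$, since $c \in \RR$. The summand $J+cE$ has distinct eigenvalues by the choice of $c$, and $-cE$ has distinct eigenvalues because $c \neq 0$ and nonzero real scaling preserves the distinctness of the eigenvalues of $E$. In particular both matrices are diagonalizable, which proves the claim for $\mathbb{J}(B)$.

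For $\mathbb{L}(B)$, there is essentially nothing new to do: either one repeats the argument verbatim, using the second half of Proposition \ref{prop:one-matrix} to supply an $E \in \mathbb{L}(B)$ with pairwise distinct eigenvalues and exploiting that $\mathbb{L}(B)$ is an $\RR$-subspace (Lemma \ref{lem:R-subspaces}); or one appeals to Corollary \ref{cor:switch1}, writing $iL = J_1 + J_2$ with $J_1, J_2 \in \mathbb{J}(B)$ having pairwise distinct eigenvalues, and then decomposing $L = (-iJ_1) + (-iJ_2)$, noting that $-iJ_k \in \mathbb{L}(B)$ and that multiplication by $-i$ preserves distinctness of eigenvalues. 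There is no real obstacle here; the only point worth being careful about is ensuring that the scalar $c$ chosen for $J + cE$ is nonzero so that $-cE$ also retains the distinctness of eigenvalues of $E$.
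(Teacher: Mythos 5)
Your argument is essentially the same as the paper's: both take $E$ from Proposition \ref{prop:one-matrix}, use the polynomial $\widetilde{q}(z)=q(zJ+E)$ to pick a nonzero $c\in\RR$ so that $J+cE$ has distinct eigenvalues, and then write $J=(J+cE)+(-cE)$, noting that a nonzero real scalar multiple of $E$ also has distinct eigenvalues. You correctly flag the one point the paper leaves implicit, namely that $c$ must be nonzero so that $-cE$ inherits distinct eigenvalues from $E$.
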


\begin{proof}
Using the notation from the proof of Theorem \ref{thm:dense-subspaces} (a), $E \in \mathbb{J}(B)$ and $c_0 \in \RR$ can be chosen such that $E$ and $J+c_0E$ both have $n$ distinct eigenvalues. Then $J = (J+c_0E) - c_0E$ is a sum of two matrices $J+c_0E, -c_0E \in \mathbb{J}(B)$ which both have pairwise distinct eigenvalues. The proof follows analogously for $\mathbb{L}(B)$.
\end{proof}

Let $A \in \MM_n(\CC)$. Using the decomposition $A = S+K$ with $S = (1/2)(A+A^\star)$ and $K=(1/2)(A-A^\star)$, see \eqref{equ:toeplitz}, accompanied by Corollary \ref{cor:sumoftwo} we end this section with the following observation related to the set $\mathcal{N}(B)$ of $B$-normal matrices.

\begin{proposition} \label{prop:sumoffour}
Let $B = B^H \in \Gl_n(\CC)$. Any matrix $A \in \MM_n(\CC)$ can be expressed as a sum of four matrices from $\mathcal{N}(B)$ with each having pairwise distinct eigenvalues.
\end{proposition}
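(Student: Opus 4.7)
The plan is to combine the Toeplitz-type decomposition from \eqref{equ:toeplitz} with the two-summand splitting provided by Corollary \ref{cor:sumoftwo}. First, given an arbitrary $A \in \MM_n(\CC)$, I would form
$$S := \tfrac{1}{2}(A + A^\star) \quad \text{and} \quad K := \tfrac{1}{2}(A - A^\star),$$
so that $A = S + K$. Since $B = B^H$ is assumed, the computation already carried out right before \eqref{equ:toeplitz} shows $S \in \mathbb{J}(B)$ and $K \in \mathbb{L}(B)$.

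Next, I would apply Corollary \ref{cor:sumoftwo} twice: once to $S$ to obtain $S = S_1 + S_2$ with $S_1, S_2 \in \mathbb{J}(B)$ both having pairwise distinct eigenvalues, and once to $K$ to obtain $K = K_1 + K_2$ with $K_1, K_2 \in \mathbb{L}(B)$ both having pairwise distinct eigenvalues. Summing these yields
$$A = S_1 + S_2 + K_1 + K_2,$$
a decomposition of $A$ as a sum of four matrices, each having $n$ pairwise distinct eigenvalues.

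Finally, I would invoke the footnote observation (in item (d) of the introduction) that $\mathcal{N}(B)$ contains both $\mathbb{J}(B)$ and $\mathbb{L}(B)$; this is immediate from the defining equations $A^\star = A$ and $A^\star = -A$, each of which trivially implies the commutation relation $AA^\star = A^\star A$. Hence all four summands $S_1, S_2, K_1, K_2$ lie in $\mathcal{N}(B)$, completing the proof. There is really no significant obstacle here — the statement is essentially a bookkeeping consequence of Corollary \ref{cor:sumoftwo} combined with the $B$-analogue of the Toeplitz decomposition, with the only thing to double-check being that the splittings given by Corollary \ref{cor:sumoftwo} are applied within the correct structure class ($\mathbb{J}(B)$ for $S$, $\mathbb{L}(B)$ for $K$) so that the summands automatically inherit $B$-normality.
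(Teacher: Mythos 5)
Your proof is correct and follows exactly the same route as the paper: decompose $A = S + K$ via \eqref{equ:toeplitz}, split each of $S$ and $K$ into two matrices with distinct eigenvalues using Corollary \ref{cor:sumoftwo}, and observe $\mathbb{J}(B), \mathbb{L}(B) \subset \mathcal{N}(B)$. You have merely spelled out the details that the paper's one-sentence proof leaves implicit.
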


\begin{proof}
As any matrix from $\mathbb{J}(B)$ and $\mathbb{L}(B)$ can be expressed as a sum of two matrices with $n$ distinct eigenvalues from the same class, Corollary \ref{cor:sumoftwo} can be applied to $A = S+K$ with $S = (1/2)(A+A^\star) \in \mathbb{J}(B)$ and $K=(1/2)(A-A^\star) \in \mathbb{L}(B)$.
\end{proof}

\subsection{The set $\mathbb{G}(B)$ of $B$-unitary matrices}
\label{ss:automorph}

In this section we analyze the set $\mathbb{G}(B)$ and show that it contains a dense subset of diagonalizable matrices. The proof relies on the Cayley transformation (cf. Theorem \ref{thm:cayley}) and uses the result from Theorem \ref{thm:dense-subspaces} (a).

\begin{theorem} \label{thm:dense-automorph}
Let $B = B^H \in \Gl_n(\CC)$. For any $G \in \mathbb{G}(B)$ and any $\varepsilon > 0$ there exists some diagonalizable $G' \in \mathbb{G}(B)$ such that $\Vert G - G' \Vert_2 \leq \varepsilon$.
\end{theorem}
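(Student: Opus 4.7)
The idea is to use the Cayley transform in Theorem \ref{thm:cayley} to transfer the density question for $\mathbb{G}(B)$ to the already settled one for $\mathbb{J}(B)$ in Theorem \ref{thm:dense-subspaces} (a). Given $G \in \mathbb{G}(B)$ and $\varepsilon > 0$, I would first choose any nonreal $w \in \CC$ (for instance $w=i$) and some $\alpha \in \CC$ with $|\alpha|=1$ and $\alpha \notin \sigma(G)$; such an $\alpha$ exists since $\sigma(G)$ is finite while the unit circle is infinite. By Theorem \ref{thm:cayley}, the matrix
$$ A := (wG - \overline{w}\alpha I_n)(G - \alpha I_n)^{-1} $$
then lies in $\mathbb{J}(B)$ and satisfies $w \notin \sigma(A)$.

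Next, I would apply Theorem \ref{thm:dense-subspaces} (a) to produce, for any $\delta > 0$, some $A' \in \mathbb{J}(B)$ with $n$ pairwise distinct eigenvalues and $\Vert A - A' \Vert_2 < \delta$. Since eigenvalues depend continuously on matrix entries, for all sufficiently small $\delta$ the matrix $A'$ also satisfies $w \notin \sigma(A')$, which is precisely the hypothesis needed to apply the forward Cayley transform. I would then define
$$ G' := \alpha(A' - \overline{w}I_n)(A' - wI_n)^{-1} \in \mathbb{G}(B). $$
The eigenvalues of $G'$ are the images of those of $A'$ under the Möbius map $z \mapsto \alpha(z - \overline{w})/(z-w)$; since this map is injective on $\CC \setminus \{w\}$, the $n$ distinct eigenvalues of $A'$ become $n$ distinct eigenvalues of $G'$, so $G'$ is diagonalizable.

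It remains to check that $G'$ is close to $G$. By Theorem \ref{thm:cayley} the two Cayley formulas are mutually inverse, so plugging $A$ itself into the definition of $G'$ returns $G$ exactly. Matrix inversion is continuous on the open set $\{X \in \MM_n(\CC) : w \notin \sigma(X)\}$ and matrix multiplication is continuous, so the map $X \mapsto \alpha(X - \overline{w}I_n)(X - wI_n)^{-1}$ is continuous in a neighbourhood of $A$; choosing $\delta$ small enough therefore yields $\Vert G - G' \Vert_2 < \varepsilon$. The only mildly delicate point is keeping track of both requirements on $A'$ --- pairwise distinct eigenvalues and $w \notin \sigma(A')$ --- at the same time, but both properties are open in $\mathbb{J}(B)$ and stable under small perturbations, so no real obstacle arises.
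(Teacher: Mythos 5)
Your proposal is correct and follows the same route as the paper's proof: pass to $\mathbb{J}(B)$ via the inverse Cayley transform, apply the density result of Theorem~\ref{thm:dense-subspaces}\,(a), transform back, and invoke continuity together with the openness of the condition $w \notin \sigma(\cdot)$. Your injective-M\"obius-map observation for why $G'$ has $n$ distinct eigenvalues is a slightly cleaner packaging of the paper's explicit diagonalization $S^{-1}F_jS = \alpha(D-\overline{w}I_n)(D-wI_n)^{-1}$, and in fact directly establishes the stronger statement in the corollary that follows.
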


\begin{proof}
Assume $G \in \mathbb{G}(B)$ is given. Let $\alpha \in \CC$, $| \alpha | = 1$, be chosen such that $\alpha \notin \sigma(G)$ and let $w \neq \overline{w}$ be some fixed number. Then, according to Theorem \ref{thm:cayley},
$$ G' := \big( wG - \overline{w} \alpha I_n \big) \big( G - \alpha I_n \big)^{-1} \in \mathbb{J}(B)$$
with $w \notin \sigma(G')$. By Theorem \ref{thm:dense-subspaces} we may construct a sequence $(F_k')_k \in \mathbb{J}(B)$ of diagonalizable matrices with $F_k' \rightarrow G'$ for $k \rightarrow \infty$. Again according to Theorem \ref{thm:cayley}, $F_k := \alpha (F_k' - \overline{w} I_n)(F_k' - wI_n)^{-1} \rightarrow G$ for $k \rightarrow \infty$
since the transformation is continuous (and both transformations in Theorem \ref{thm:cayley} are inverse to each other). Certainly, $w \notin \sigma(F_k')$ has to hold for $F_k$ to be defined. However, $w \notin \sigma(G')$  implies that $w \notin \sigma(F_k')$ will hold if $F_k'$ is close enough to $G'$ (this can be interpreted as a consequence of the Bauer-Fike Theorem, cf. \cite[Thm.\,7.2.2]{GolVLoan}, since all $F_k'$ are diagonalizable). Formally, there is some $\eta >0$ such that  $w \notin \sigma(F_k')$ for all $F_k' \in \MM_n(\CC)$ with $\Vert G' - F_k' \Vert_2 < \eta$.
From now on, it suffices to consider only those $F_k'$ from the sequence which are close enough to $G'$ such that $F_k$ is defined.

Now let $\varepsilon > 0$ be given. Then there exists some $\delta > 0$ such that
$$ \Vert F_k - G \Vert_2 < \varepsilon \quad \textnormal{for all} \; F_k \; \textnormal{such that} \quad \Vert F_k'-G' \Vert_2 < \delta$$
due to the continuity of the transformation.
Now choose some $F_j'$ from the sequence with $\Vert F_j' - G' \Vert_2 < \min \lbrace \delta , \eta \rbrace$ (so, in particular, $w \notin \sigma(F_j')$). Then $F_j \in \mathbb{G}(B)$ is defined and $\Vert F_j - G \Vert_2 < \varepsilon$. As $F_j'$ is diagonalizable, assume $S^{-1}F_j'S = D$ for some diagonal $D \in \MM_n(\CC)$. Then it follows from a direct calculation that
\begin{equation} S^{-1}F_jS = \alpha \big( D - \overline{w} I_n \big) \big( D - w I_n \big)^{-1} \label{equ:automorph1} \end{equation}
is a diagonalization of $F_j$ and the proof is complete.
\end{proof}

\begin{corollary}
Let $B=B^H \in \Gl_n(\CC)$. The set of matrices with pairwise distinct eigenvalues is dense in $\mathbb{G}(B)$.
\end{corollary}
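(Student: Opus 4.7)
The plan is to observe that the proof of Theorem \ref{thm:dense-automorph} already delivers this stronger statement essentially for free, provided we use the pairwise-distinct-eigenvalue version of Theorem \ref{thm:dense-subspaces}(a) as the input to the Cayley transform and then verify that the Cayley transform preserves distinctness of eigenvalues.

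More concretely, I would fix $G \in \mathbb{G}(B)$ and $\varepsilon > 0$, pick $\alpha \in \CC$ with $|\alpha|=1$, $\alpha \notin \sigma(G)$, and a nonreal $w$, and form $G' := (wG - \overline{w}\alpha I_n)(G-\alpha I_n)^{-1} \in \mathbb{J}(B)$ via Theorem \ref{thm:cayley}, noting $w \notin \sigma(G')$. By Theorem \ref{thm:dense-subspaces}(a), I can choose $F' \in \mathbb{J}(B)$ with $n$ pairwise distinct eigenvalues and $\Vert F' - G' \Vert_2$ as small as I like; in particular, small enough that (i) $w \notin \sigma(F')$, and (ii) the inverse Cayley transform $F := \alpha(F' - \overline{w}I_n)(F' - wI_n)^{-1} \in \mathbb{G}(B)$ satisfies $\Vert F - G \Vert_2 < \varepsilon$. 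Both conditions are guaranteed exactly as in the proof of Theorem \ref{thm:dense-automorph} (continuity of eigenvalues for the first, continuity of the Cayley transform on matrices bounded away from $\sigma(w I_n)$ for the second).

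The only new point to verify is that $F$ has $n$ pairwise distinct eigenvalues. Writing $S^{-1}F'S = D = \diag(\lambda_1, \ldots, \lambda_n)$ with the $\lambda_i$ distinct, equation \eqref{equ:automorph1} gives
\begin{equation*}
S^{-1}FS \;=\; \alpha (D - \overline{w} I_n)(D - w I_n)^{-1} \;=\; \diag\bigl(\mu_1, \ldots, \mu_n\bigr),
\end{equation*}
where $\mu_i = \alpha(\lambda_i - \overline{w})/(\lambda_i - w)$. The map $\lambda \mapsto \alpha(\lambda - \overline{w})/(\lambda - w)$ is a Möbius transformation, hence injective on $\CC \setminus \lbrace w \rbrace$. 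Since $w \notin \sigma(F')$, each $\lambda_i \neq w$ and the $\mu_i$ are therefore pairwise distinct.

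There is no real obstacle; the only minor subtlety is remembering to invoke the stronger form of Theorem \ref{thm:dense-subspaces}(a) (distinct eigenvalues, not merely diagonalizable) and noting that Möbius injectivity preserves this property through the Cayley transform. The result then falls out immediately.
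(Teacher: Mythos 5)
Your proof is correct and takes essentially the same approach as the paper: invoke the distinct-eigenvalue refinement of Theorem \ref{thm:dense-subspaces}(a) (Corollary \ref{cor:denseJL}), run the Cayley transform argument from Theorem \ref{thm:dense-automorph}, and observe that distinctness of eigenvalues survives. The paper states the last point without justification; you supply the clean explanation via injectivity of the M\"obius map $\lambda \mapsto \alpha(\lambda - \overline{w})/(\lambda - w)$ (nondegenerate since $w \neq \overline{w}$), which is exactly the right reason.
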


\begin{proof}
This follows from the proof of Theorem \ref{thm:dense-automorph} and Corollary \ref{cor:denseJL}, since the sequence of matrices $(F_k')_k \in \mathbb{J}(B)$ constructed in the proof of Theorem \ref{thm:dense-automorph} can be chosen such that all matrices have pairwise distinct eigenvalues. Then, if $S^{-1}F_j'S = D$ for some diagonal $D \in \MM_n(\CC)$ with $n$ distinct eigenvalues, the matrix $S^{-1}F_jS$ in \eqref{equ:automorph1} has distinct eigenvalues, too.
\end{proof}

\subsection{The set $\mathcal{N}(B)$ of $B$-normal matrices}
\label{ss:normal}

We now consider the density of diagonalizable matrices in the set $\mathcal{N}(B)$ of $B$-normal matrices. The following Lemma \ref{lem:suminN} will be helpful to prove Theorem \ref{thm:main1}. It shows how to construct $B$-normal matrices from pairs of commuting $B$-selfadjoint matrices.

\begin{lemma} \label{lem:suminN}
Let $B = B^H \in \Gl_n(\CC)$. If $F,G \in \mathbb{J}(B)$ and $F$ and $G$ commute, then $A = F \pm iG \in \mathcal{N}(B)$.
\end{lemma}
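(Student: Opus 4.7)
The plan is to verify the defining equation $AA^\star = A^\star A$ directly by computing both sides in terms of $F$ and $G$. The key preparatory observation is that the adjoint operation $A \mapsto A^\star = B^{-1}A^HB$, while $\mathbb{R}$-linear, conjugates complex scalars: for any $C \in \MM_n(\CC)$ and $z \in \CC$ we have $(zC)^\star = B^{-1}(\overline{z}C^H)B = \overline{z}\,C^\star$, because conjugation appears inside the Hermitian transpose. In particular $(iG)^\star = -iG^\star$.

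With this in hand, I first compute $A^\star$. Since $F,G \in \mathbb{J}(B)$ means $F^\star = F$ and $G^\star = G$, and using $\mathbb{R}$-linearity together with the scalar-conjugation observation above, I get
\begin{equation*}
(F + iG)^\star = F^\star + (iG)^\star = F - iG,
\end{equation*}
and symmetrically $(F - iG)^\star = F + iG$. So in either case, writing $A = F + \varepsilon G$ with $\varepsilon \in \{+i,-i\}$, the adjoint is $A^\star = F - \varepsilon G$.

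The second step is to multiply out $AA^\star$ and $A^\star A$:
\begin{align*}
AA^\star &= (F+\varepsilon G)(F-\varepsilon G) = F^2 + G^2 + \varepsilon(GF - FG), \\
A^\star A &= (F-\varepsilon G)(F+\varepsilon G) = F^2 + G^2 + \varepsilon(FG - GF),
\end{align*}
where I have used $\varepsilon^2 = -1$. Subtracting gives $AA^\star - A^\star A = 2\varepsilon(GF - FG)$, which vanishes precisely because $F$ and $G$ commute. Therefore $A \in \mathcal{N}(B)$.

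There is essentially no obstacle here; the only subtlety to flag is the scalar-conjugation behaviour of $^\star$, which is why it is important that Corollary \ref{cor:switch1} (and the present lemma) refer to multiplication by $i$ and not by a general complex scalar. The commutativity hypothesis on $F$ and $G$ is used in exactly one place, namely to kill the commutator $[G,F]$ appearing in the difference $AA^\star - A^\star A$.
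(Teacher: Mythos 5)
Your proof is correct and follows essentially the same route as the paper's: compute $A^\star = F \mp iG$ using $F^\star = F$, $G^\star = G$, and the conjugate-linearity of $^\star$ over $\CC$, then observe that $AA^\star$ and $A^\star A$ agree because $F$ and $G$ commute. You merely spell out the product expansions a bit more explicitly than the paper does.
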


\begin{proof}
Note that
$$A^\star = (F \pm iG)^\star = B^{-1}(F \pm iG)^HB = B^{-1}F^HB \pm B^{-1}(iG)^HB = F \mp iG$$
so $AA^\star = (F \pm iG)(F \mp iG)$ and $A^\star A = (F \mp iG)(F \pm iG)$. Since $FG = GF$ we see that $AA^\star = A^\star A$ holds.
\end{proof}

Theorem \ref{thm:main1} states that the density result obtained for $\mathbb{J}(B), \mathbb{L}(B)$ and $\mathbb{G}(B)$ before is true for $\mathcal{N}(B)$ under the additional assumption that $B$ is a unitary matrix.

\begin{theorem} \label{thm:main1}
Let $B \in \Gl_n(\CC)$ with $B=B^H$ and $B^HB = I_n$. For any $N \in \mathcal{N}(\B)$ and any $\varepsilon > 0$ there exists some diagonalizable $N' \in \mathcal{N}(\B)$ such that $\Vert N - N' \Vert_2 \leq \varepsilon$.
\end{theorem}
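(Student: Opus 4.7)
The plan is to reduce density in $\mathcal{N}(B)$ to density in $\mathbb{J}(B)$ by encoding every $B$-normal matrix through a single $B$-selfadjoint ``parent''. Given $N \in \mathcal{N}(B)$ and $\varepsilon > 0$, first perform the Toeplitz-type splitting $N = S + iH$ where $S := \tfrac{1}{2}(N + N^\star)$ and $H := -\tfrac{i}{2}(N - N^\star)$. Since $B = B^H$ implies $(A^\star)^\star = A$, one immediately sees $S \in \mathbb{J}(B)$, and $H \in \mathbb{J}(B)$ follows from $(N - N^\star) \in \mathbb{L}(B)$ together with Corollary \ref{cor:switch1}. A direct computation turns $NN^\star = N^\star N$ into $SH = HS$. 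Thus every $B$-normal $N$ arises as in Lemma \ref{lem:suminN} from a commuting pair $(S, H)$ of $B$-selfadjoint matrices.

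The next step, and the main obstacle, is to produce a $1$-regular $C \in \mathbb{J}(B)$ that commutes with both $S$ and $H$. Proposition \ref{prop:commute} settles this for a single matrix, but must be strengthened to handle commuting pairs. The natural path is to apply Theorem \ref{thm:pair-form} so that $\widetilde{S} := T^{-1}ST$ is in Jordan form and $\widetilde{B} := T^HBT$ is in the canonical pair-form; analyze the block shape that commutation with $\widetilde{S}$ imposes on $\widetilde{H} := T^{-1}HT$; and build a block-diagonal $\widetilde{C} \in \mathbb{J}(\widetilde{B})$ with the same Jordan block sizes as $\widetilde{S}$ and pairwise distinct eigenvalues (to secure $1$-regularity), chosen so as to also commute with the particular $\widetilde{H}$ that appears. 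Setting $C := T \widetilde{C} T^{-1}$ transports the result back to the original coordinates.

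With such a $C$ in hand, Theorem \ref{thm:1-regular}(a) provides $p, q \in \CC[x]$ with $S = p(C)$ and $H = q(C)$. Applying $^\star$ and using $C^\star = C$, $S^\star = S$, $H^\star = H$ (together with the identity $(r(C))^\star = \bar{r}(C)$ valid for any complex polynomial $r$) yields $\bar{p}(C) = p(C)$ and $\bar{q}(C) = q(C)$; the linear independence of $I_n, C, C^2, \ldots, C^{n-1}$ stated in Theorem \ref{thm:1-regular}(b) then forces $p = \bar{p}$ and $q = \bar{q}$, i.e.\ both polynomials have real coefficients.

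Finally, by Theorem \ref{thm:dense-subspaces}(a) pick a diagonalizable $C' \in \mathbb{J}(B)$ arbitrarily close to $C$ and set $N' := p(C') + iq(C')$. Because $p$ and $q$ have real coefficients and $\mathbb{J}(B)$ is an $\RR$-subspace (Lemma \ref{lem:R-subspaces}), the matrices $p(C')$ and $q(C')$ again lie in $\mathbb{J}(B)$, and they commute as polynomials in a common matrix. Lemma \ref{lem:suminN} therefore places $N' \in \mathcal{N}(B)$, and diagonalizability is immediate since $p(C')$ and $q(C')$ are jointly diagonal in any eigenbasis of $C'$. Continuity of polynomial evaluation gives $\|N - N'\|_2 \leq \|p(C) - p(C')\|_2 + \|q(C) - q(C')\|_2 < \varepsilon$ once $C'$ is chosen sufficiently close to $C$. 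Steps three and four are routine consequences of the material in Sections \ref{ss:polynomials} and \ref{ss:selfskew}; the real content lies in the common-commutant construction of step two.
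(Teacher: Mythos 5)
Your steps one, three, and four are sound (and the real-coefficients argument via linear independence of $I_n, C, \ldots, C^{n-1}$ is a nice shortcut around the paper's use of Lemma \ref{lem:projection}), but step two---which you yourself flag as ``the main obstacle''---is a genuine gap, and the construction you sketch for it does not work. You propose to bring $S$ and $B$ to the canonical pair form of Theorem~\ref{thm:pair-form}, write $\widetilde{S}$ as a Jordan form, and then pick $\widetilde{C}$ block-diagonal with the same Jordan block sizes as $\widetilde{S}$ but distinct eigenvalues $a_1, \ldots, a_\beta$, ``chosen so as to also commute with the particular $\widetilde{H}$ that appears.'' But if $\widetilde{S}$ has several Jordan blocks with the \emph{same} eigenvalue, the matrices commuting with $\widetilde{S}$ can have nonzero off-diagonal couplings between those blocks, and such an $\widetilde{H}$ will not commute with any block-diagonal $\widetilde{C}$ having distinct eigenvalues on those blocks. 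Concretely, take $\widetilde{B} = R_2 \oplus R_2$, $\widetilde{S} = J_2(0) \oplus J_2(0)$ and
$\widetilde{H} = \left[\begin{smallmatrix} 0 & I_2 \\ I_2 & 0 \end{smallmatrix}\right]$;
one checks $\widetilde{S}, \widetilde{H} \in \mathbb{J}(\widetilde{B})$ commute, yet any $\widetilde{C} = J_2(a_1) \oplus J_2(a_2)$ commutes with $\widetilde{H}$ only if $a_1 = a_2$, which kills $1$-regularity. (A $1$-regular $B$-selfadjoint matrix commuting with both does exist here, e.g.\ $\left[\begin{smallmatrix} J_2(0) & I_2 \\ I_2 & J_2(0) \end{smallmatrix}\right]$, but it is not of the block-diagonal shape you construct, and you give no general argument for its existence.) The stronger claim you need---every $B$-normal $N$ commutes with some $1$-regular $B$-selfadjoint matrix---is not established in the paper either and would amount to a structural result about the commutant of a $B$-normal matrix that seems genuinely hard.

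The paper's proof avoids this obstacle entirely by a different maneuver. Rather than seeking a $1$-regular matrix commuting with \emph{both} $S$ and $K_H$, it applies Proposition~\ref{prop:commute} only to $K_H$, obtaining a $1$-regular $E \in \mathbb{J}(B)$ in the commutant of $K_H$, and then perturbs $S$ itself to $S_c := S + cE$. Since both $S$ and $E$ commute with $K_H$, so does $S_c$; and by the polynomial argument of Corollary~\ref{cor:1-regular}, $S_c$ is $1$-regular for almost every $c \in \RR$. Thus $K_H = p(S_c)$ for some polynomial $p$, and the small shift from $S$ to $S_c$ is absorbed into the $\varepsilon$-budget. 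In other words, the paper makes (a perturbation of) $S$ play the role of your $C$, rather than introducing a third matrix that must commute with a whole commuting pair. This is the single idea your proposal is missing, and without it the plan does not close.
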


\begin{proof}
Let $N \in \mathcal{N}(B)$ be arbitrary. We define $S := \frac{1}{2}(N + N^\star) \in \mathbb{J}(B)$, $K := \frac{1}{2}(N - N^\star) \in \mathbb{L}(B)$ as in \eqref{equ:toeplitz} and express $N$ as
\begin{equation} N = S + K = S - i^2 K = S - i K_H \label{equ:normal-matrix} \end{equation}
with $K_H  := i K$. Notice that $K_H \in \mathbb{J}(B)$ according to Corollary \ref{cor:switch1}. It follows straight forward that $SK_H = K_HS$ holds, that is
\begin{align*}
S K_H &= (1/2) \big( A + A^\star \big) \cdot (i/2) \big( A - A^\star \big) = (i/4) \big( A+A^\star \big) \big( A - A^\star \big) \\
&= (i/4) \big( A - A^\star \big) \big( A+A^\star \big) = (i/2) \big( A - A^\star \big) \cdot (1/2) \big( A+A^\star \big) \\ &= K_HS,
\end{align*}
so $S$ and $K_H$ commute.

According to Proposition \ref{prop:commute} there exists some 1-regular $E = [e_{ij}]_{ij} \in \mathbb{J}(B)$ such that $K_H$ and $E$ commute, that is, $K_H E = E K_H$ holds.
Now, we consider the family of all matrices $M = M(z) = zS + E \in \MM_n(\CC)$ for $z \in \CC$. As both $S$ and $E$ commute with $K_H$, so does each $M(z)$. In particular, note that $M(0)=E$ is 1-regular.

According to Corollary \ref{cor:1-regular} there is some $w_\ell(x_{11}, \ldots , x_{nn})$ (from the set of polynomials $w_k$, $k=1, \ldots , s$, that vanish simultaneously for matrices that are not 1-regular) such that $w_\ell(E) = w_\ell(e_{11}, e_{12}, \ldots , e_{nn}) \neq 0$. Now, as $S$ and $E$ are fixed, consider $\widetilde{w}(z) := w_\ell(M(z)) = w_\ell(zS+E) \in \CC[z]$ as a single-variable-polynomial and notice that $\widetilde{w}(0) = w_\ell(E) \neq 0$. Thus $\widetilde{w} \neq 0$ is not the zero-polynomial. Recall that $\widetilde{w}(z_0) \neq 0$ is a sufficient condition for $M(z_0) = z_0S+E$ to be 1-regular.
Consequently, as $\widetilde{w}(z)$ does only have a finite number of roots, $M(z_0)=z_0S + E$ will be 1-regular for almost all $z_0 \in \CC$. Therefore $S + cE = c(c^{-1}S + E)$ is also 1-regular for all but a finite number of nonzero $c \in \CC$.

Now let $\varepsilon = 2\epsilon > 0$ be given. Choose some $c \in \RR$ with $|c| \leq \epsilon/(2 \Vert E \Vert_2)$ such that $S_c := S + cE \in \mathbb{J}(B)$ is 1-regular. Then
$$ \Vert S - S_c \Vert_2 = \Vert c E \Vert_2 = |c| \Vert E \Vert_2 \leq \frac{\epsilon}{2 \Vert E \Vert_2} \Vert E \Vert_2 = \frac{\epsilon}{2}.$$
As $S_c$ and $K_H$ commute (recall that $S$ and $E$ both commute with $K_H$) and $S_c$ is 1-regular, there exists some polynomial $p(x) \in \CC[x]$ with $p(S_c) = K_H$ according to Theorem \ref{thm:1-regular} (a). Moreover, from Theorem \ref{thm:dense-subspaces} (see also Corollary \ref{cor:denseJL}), there exists a sequence $(F_k)_k \in \mathbb{J}(B)$, $k \in \mathbb{N}$, of diagonalizable matrices with $F_k \rightarrow S_c$. Thus, $p(F_k) \rightarrow K_H$ for $k \rightarrow \infty$ since $p(x)$ is continuous. Now, for $\epsilon /2$ there exists some $\delta > 0$ such that
\begin{equation} \Vert p(S_c) - p(F_k) \Vert_2 = \Vert K_H - p(F_k) \Vert_2 < \frac{\epsilon}{2} \quad \textnormal{if} \quad \Vert S_c - F_k \Vert_2 < \delta \label{equ:est0} \end{equation}
due to the continuity of $p(x)$. Next, choose some $F_\ell \in \mathbb{J}(B)$ from the sequence $(F_k)_k$ with $\Vert S_c - F_\ell \Vert < \min \lbrace \epsilon/2, \delta \rbrace$. Then we obtain that
\begin{equation} \begin{aligned}
\Vert S - F_\ell \Vert_2 &= \Vert S - S_c + S_c - F_\ell \Vert_2 \\ &\leq  \Vert S - S_c \Vert_2 + \Vert S_c - F_\ell \Vert_2 < \frac{\epsilon}{2} + \frac{\epsilon}{2} = \epsilon.
\end{aligned} \label{equ:est1} \end{equation}
As $p(x) \in \CC[x]$ may have complex coefficients, it might be the case that $p(F_\ell) \notin \mathbb{J}(B)$. However, $G := (1/2)(p(F_\ell) + p(F_\ell)^\star) \in \mathbb{J}(B)$ (recall \eqref{equ:toeplitz}). Assume $p(x)$ is given by $\sum_{k=0}^t a_k x^k$ for complex coefficients $a_k \in \CC, k=0, \ldots , t$. Then
\begin{align*}
p(F_\ell)^\star &= B^{-1} \left( \sum_{k=0}^t a_k F_\ell^k \right)^H B = B^{-1} \left( \sum_{k=0}^t \overline{a_k} \left( F_\ell^k \right)^H \right)B \\ &= B^{-1} \left( \sum_{k=0}^t \overline{a_k} \left( F_\ell^H \right)^k \right) B = \sum_{k=0}^t \overline{a_k} \left( B^{-1}F_\ell^HB \right)^k = \sum_{k=0}^t \overline{a_k} F_\ell^k,
\end{align*}
so $p(F_\ell)^\star = q(F_\ell)$ with $q(x) = \sum_{k=0}^t \overline{a_k} x^k$. In particular, $(1/2)(p(F_\ell) + q(F_\ell)) = r(F_\ell) = \sum_{k=0}^t 2 \mathfrak{Re}(a_k)F_\ell^k$ is a (real) polynomial in $F_\ell$ (where $\mathfrak{Re}(a_k)$ denotes the real part of the complex number $a_k$).
As $$ \Vert p(F_\ell) - \frac{1}{2} (p(F_\ell) + p(F_\ell)^\star ) \Vert_2 = \Vert p(F_\ell) - r(F_\ell) \Vert_2 \leq \Vert p(F_\ell) - X \Vert_2$$ holds for any $X \in \mathbb{J}(B)$ according to Lemma \ref{lem:projection} and $\Vert p(F_\ell) - K_H \Vert_2 < \epsilon /2$ (recall \eqref{equ:est0} and the choice of $F_\ell$) we conclude $\Vert p(F_\ell) - r(F_\ell) \Vert_2 \leq \epsilon /2$. In analogy to \eqref{equ:est1} we obtain
\begin{align*} \Vert K_H - G \Vert_2 &= \Vert K_H - p(F_\ell) + p(F_\ell) - G \Vert_2 \\ &\leq \Vert K_H - p(F_\ell) \Vert_2 + \Vert p(F_\ell) - G \Vert_2 < \frac{\epsilon}{2} + \frac{\epsilon}{2} = \epsilon. \end{align*}
Finally, we arrived at $ \Vert S - F_\ell \Vert_2 < \epsilon $ and $\Vert K_H - r(F_\ell) \Vert_2 < \epsilon $. As $F_\ell, r(F_\ell) \in \mathbb{J}(B)$, note that $N' := F_\ell - i r(F_\ell) \in \mathcal{N}(B)$ according to Lemma \ref{lem:suminN}. Moreover, we have
\begin{align*}
\Vert N - N' \Vert_2 &= \Vert (S -iK_H) - (F_\ell - i r(F_\ell)) \Vert_2 = \Vert (S - F_\ell) - i(K_H - r(F_\ell) \Vert_2 \\
&\leq \Vert S - F_\ell \Vert_2 + \Vert K_H - r(F_\ell) \Vert_2 \leq \epsilon + \epsilon  = 2 \epsilon = \varepsilon.
\end{align*}
As $F_\ell$ is diagonalizable, so is $r(F_\ell)$. In addition, as $F_\ell$ and $r(F_\ell)$ commute, they are simultaneously diagonalizable \cite[Thm.\,1.3.21]{HoJo}. This certainly implies $N' = F_\ell - ir(F_\ell)$ to be diagonalizable (in particular, $N'$ is a polynomial in $F_\ell$). Thus we have found a diagonalizable matrix $N' \in \mathcal{N}(B)$ with distance at most $\varepsilon$ from $N$ and the proof is complete.
\end{proof}

Applying Lemma \ref{lem:switch2} and using the result of Theorem \ref{thm:main1} we may now prove the density of diagonalizable matrices in $\mathcal{N}(B)$ without the assumption of $B$ being unitary.

\begin{theorem} \label{thm:main2}
Let $B \in \Gl_n(\CC)$ with $B=B^H$. For any $N \in \mathcal{N}(\B)$ and any $\varepsilon > 0$ there exists some diagonalizable $N' \in \mathcal{N}(\B)$ such that $\Vert N - N' \Vert_2 \leq \varepsilon$.
\end{theorem}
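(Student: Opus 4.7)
The plan is to reduce the general Hermitian case to the unitary Hermitian case already handled in Theorem \ref{thm:main1}, using Sylvester's law of inertia together with the similarity invariance of $B$-normality provided by Lemma \ref{lem:switch2}.

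First, since $B = B^H \in \Gl_n(\CC)$, I would invoke Sylvester's theorem (exactly as in the proof of Proposition \ref{prop:one-matrix}) to obtain some $Q \in \Gl_n(\CC)$ such that
\[ \widetilde{B} := Q^H B Q = \begin{bmatrix} -I_{m_-} & \\ & I_{m_+} \end{bmatrix}. \]
The crucial observation is that $\widetilde{B}$ is simultaneously Hermitian ($\widetilde{B} = \widetilde{B}^H$) and unitary ($\widetilde{B}^H \widetilde{B} = I_n$), so Theorem \ref{thm:main1} directly applies to matrices in $\mathcal{N}(\widetilde{B})$.

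Next, given $N \in \mathcal{N}(B)$, set $\widetilde{N} := Q^{-1} N Q$. By Lemma \ref{lem:switch2}, $\widetilde{N} \in \mathcal{N}(\widetilde{B})$. For any target precision $\delta > 0$, Theorem \ref{thm:main1} produces a diagonalizable $\widetilde{N}' \in \mathcal{N}(\widetilde{B})$ with $\Vert \widetilde{N} - \widetilde{N}' \Vert_2 \leq \delta$. Define $N' := Q \widetilde{N}' Q^{-1}$. Applying Lemma \ref{lem:switch2} in the reverse direction yields $N' \in \mathcal{N}(B)$, and $N'$ is diagonalizable since it is similar to the diagonalizable matrix $\widetilde{N}'$.

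Finally, I would estimate
\[ \Vert N - N' \Vert_2 = \Vert Q ( \widetilde{N} - \widetilde{N}' ) Q^{-1} \Vert_2 \leq \Vert Q \Vert_2 \Vert Q^{-1} \Vert_2 \Vert \widetilde{N} - \widetilde{N}' \Vert_2 \leq \kappa_2(Q) \cdot \delta, \]
where $\kappa_2(Q) = \Vert Q \Vert_2 \Vert Q^{-1} \Vert_2$ is the (fixed, finite) 2-norm condition number of $Q$. Choosing $\delta := \varepsilon / \kappa_2(Q)$ at the outset gives $\Vert N - N' \Vert_2 \leq \varepsilon$, completing the proof. There is no real obstacle here; the only point requiring attention is verifying that the congruence-normalized form $\widetilde{B}$ from Sylvester's theorem happens to be unitary as well as Hermitian, which is precisely what makes the reduction to Theorem \ref{thm:main1} work.
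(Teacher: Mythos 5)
Your proof is correct and follows essentially the same route as the paper: reduce to the unitary case via Sylvester's law of inertia and Lemma \ref{lem:switch2}, then transfer back by similarity. You make the norm estimate explicit via the condition number $\kappa_2(Q)$, a detail the paper leaves implicit when it asserts that density transfers under the fixed similarity, but this is a minor elaboration rather than a different argument.
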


\begin{proof}
According to Theorem \ref{thm:main1} the statement is true if $B$ is unitary, so assume $B \in \Gl_n(\CC)$ is not unitary. According to \cite[Thm.\,4.5.7]{HoJo} there exists some $Q \in \Gl_n(\CC)$ such that
$$ B' := Q^HBQ = \begin{bmatrix} -I_{m_{-}} & \\ & I_{m_+} \end{bmatrix}$$
where $m_{-}$ ($m_+$) is the number of negative (positive) eigenvalues of $B$. According to Lemma \ref{lem:switch2} we have $Q^{-1} \mathcal{N}(B) Q = \mathcal{N}(B')$. Since $B'$ is unitary, Theorem \ref{thm:main1} applies and the set of diagonalizable matrices is dense in $\mathcal{N}(B')$. As any matrix $A \in \mathcal{N}(B')$ is diagonalizable if and only if $QAQ^{-1} \in \mathcal{N}(B)$ is diagonalizable the density result follows for $\mathcal{N}(B)$.
\end{proof}

In Sections \ref{ss:selfskew}, \ref{ss:automorph} and \ref{ss:normal} we showed the density results of semisimple matrices for $B=B^H$. Hereby, $B$ was an arbitrary (indefinite) Hermitian matrix. Before we pass on to the case $B=-B^H$, do not overlook the special case of $B$ being positive definite (and $[ \cdot, \cdot]_B$ defining a scalar product). Due to Lemma \ref{lem:switch2}, the same situation as for $B=I_n$ takes place and all matrices in $\mathbb{J}(B), \mathbb{L}(B), \mathbb{G}(B)$ and $\mathcal{N}(B)$ are semisimple. In fact, for a Hermitian positive definite matrix $B \in \Gl_n(\CC)$ there exists some $Q \in \Gl_n(\CC)$ such that $Q^HBQ = I_n$. Whenever $A \in \mathbb{J}(B)$, then $A' := Q^{-1}AQ \in \mathbb{J}(I_n)$ is Hermitian and, in consequence, semisimple. As semisimplicity is preserved under similarity transformation, $A$ must have been semisimple, too. The same reasoning holds in an analogous way for $\mathbb{L}(B), \mathbb{G}(B)$ and $\mathcal{N}(B)$ using Lemma \ref{lem:switch2}.

\subsection{Skew-Hermitian Sesquilinear Forms}
\label{ss:skewHermitian}

We now consider the case where $B = - B^H \in \Gl_n(\CC)$ is a skew-Hermitian matrix\footnote{Notice that $n$ needs to be even for $B=-B^H$ to be nonsingular.}. Fortunately, this situation can be completely traced back to the analysis from Sections \ref{ss:selfskew}, \ref{ss:automorph} and \ref{ss:normal}.

First note that, if $B=-B^H$ holds, then  $iB$ is Hermitian (i.e. $(iB)^H = -iB^H = iB$). Moreover we have that
$$(iB)^{-1}A^H(iB) = -iB^{-1}A^H(iB) = -i^2 B^{-1}A^HB = B^{-1}A^HB.$$
This shows that $A \in \mathbb{J}(B)$ ($A \in \mathbb{L}(B)$, resp.) if and only if $A \in \mathbb{J}(iB)$ ($A \in \mathbb{L}(iB)$, resp.). Moreover
$$A^H(iB)A=iB \; \Leftrightarrow \; i \big(A^HBA \big) = iB \; \Leftrightarrow \; A^HBA = B,$$
so $A \in \mathbb{G}(B)$ if and only if $A \in \mathbb{G}(iB)$. Finally, the same reasoning reveals that $\mathcal{N}(B) = \mathcal{N}(iB)$. Therefore, some matrix in any of these sets corresponding to $B$ can always be interpreted as a matrix from the same set corresponding to $iB$. The main theorems obtained in the previous sections thus apply directly when $B=-B^H$.

\begin{theorem} \label{thm:main3}
Let $B \in \Gl_n(\CC)$ with $B=-B^H$.
For any $A \in \MM_n(\CC)$ in any of the sets $\mathbb{J}(B), \mathbb{L}(B), \mathbb{G}(B), \mathcal{N}(B)$ and any $\varepsilon > 0$ there exists some diagonalizable $A' \in \MM_n(\CC)$ belonging to the same set such that $\Vert A-A' \Vert_2 \leq \varepsilon$. In addition, the set of matrices with pairwise distinct eigenvalues is dense in $\mathbb{J}(B)$, $\mathbb{L}(B)$ and $\mathbb{G}(B)$.
\end{theorem}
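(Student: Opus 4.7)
The plan is essentially to reduce the skew-Hermitian case to the Hermitian case via multiplication by $i$, exploiting the observations made in the paragraph immediately preceding the theorem statement. Since $B = -B^H$ implies $(iB)^H = -iB^H = iB$, the matrix $iB$ is Hermitian, and clearly $iB \in \Gl_n(\CC)$ as well. The text has already verified the crucial set-equalities
$$\mathbb{J}(B) = \mathbb{J}(iB), \quad \mathbb{L}(B) = \mathbb{L}(iB), \quad \mathbb{G}(B) = \mathbb{G}(iB), \quad \mathcal{N}(B) = \mathcal{N}(iB),$$
so all four structured classes attached to the skew-Hermitian $B$ coincide as \emph{subsets of} $\MM_n(\CC)$ with the corresponding classes attached to the Hermitian $B' := iB$.

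Given this, I would simply invoke the results proved in the earlier subsections. Fix $A$ in one of the four sets associated with $B$ and some $\varepsilon > 0$. By the set-equalities above, $A$ also belongs to the corresponding set attached to the Hermitian matrix $iB$. Then I would apply the appropriate previously-established density theorem: Theorem \ref{thm:dense-subspaces} for $\mathbb{J}(iB)$ or $\mathbb{L}(iB)$, Theorem \ref{thm:dense-automorph} for $\mathbb{G}(iB)$, and Theorem \ref{thm:main2} for $\mathcal{N}(iB)$. Each produces a diagonalizable matrix $A'$ in the respective class attached to $iB$ with $\Vert A - A' \Vert_2 \le \varepsilon$, and this $A'$ simultaneously lies in the original class attached to $B$ by the same set-equalities. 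Because diagonalizability is intrinsic to $A'$ (it does not depend on $B$ at all), the same $A'$ serves as the required approximant.

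For the second statement, the analogous appeal is to Corollary \ref{cor:denseJL} (for $\mathbb{J}(iB)$ and $\mathbb{L}(iB)$) and the corollary following Theorem \ref{thm:dense-automorph} (for $\mathbb{G}(iB)$), which guarantee that in each case the approximating $A'$ can be chosen with pairwise distinct eigenvalues; once again the set-equalities transport this conclusion from $iB$ to $B$. There is essentially no obstacle in this proof, and certainly no analogue of $\mathcal{N}$ appears in the distinct-eigenvalues statement because the Hermitian-case density theorem for $\mathcal{N}$ does not assert pairwise distinctness either. The entire argument is a bookkeeping reduction, and I expect the shortest possible write-up to be just a few lines citing the relevant earlier results.
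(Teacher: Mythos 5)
Your proposal is correct and coincides with the paper's own argument: the paper likewise notes that $iB$ is Hermitian, that the four structure classes for $B$ and $iB$ coincide as subsets of $\MM_n(\CC)$, and then cites Theorems~\ref{thm:dense-subspaces}, \ref{thm:dense-automorph} and \ref{thm:main2} (with the corollaries for the distinct-eigenvalue refinement). There is nothing to add.
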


\begin{proof}
Any matrix $A \in \MM_n(\CC)$ from $\mathbb{J}(B), \mathbb{L}(B)$, $\mathbb{G}(B)$ or $\mathcal{N}(B)$ can be interpreted as a matrix from $\mathbb{J}(iB), \mathbb{L}(iB)$, $\mathbb{G}(iB)$ or $\mathcal{N}(iB)$, respectively, and Theorems \ref{thm:dense-subspaces}, \ref{thm:dense-automorph} and \ref{thm:main2} apply.
\end{proof}

Certainly, there are analogous results for the case $B=-B^H$ as stated in Corollary \ref{cor:sumoftwo} and Proposition \ref{prop:sumoffour}.

\section{Conclusions}
\label{s:conclusions}

In this work we considered the structure classes of $B$-selfadjoint, $B$-skew\-adjoint, $B$-unitary and $B$-normal matrices defined by an (indefinite) scalar product $[x,y] = x^HBy$ on $\CC^n \times \CC^n$  for some $B \in \Gl_n(\CC)$. We showed that, if $B= \pm B^H$, the set of semisimple (i.e. diagonalizable) matrices is dense in the set of all $B$-selfadjoint, $B$-skewadjoint, $B$-unitary and $B$-normal matrices.

\end{document}